\newcommand{\Q}{\mathbb{Q}}
\newcommand{\C}{\mathbb{C}}
\newcommand{\R}{\mathbb{R}}
\newcommand{\N}{\mathbb{N}}
\newcommand{\ran}{\operatorname{ran}}
\newcommand{\norm}[1]{\left\| #1 \right\|}
\newcommand{\unit}{\mathbf{1}}
\newcommand{\ratpoly}{\mathfrak{p}}
\newcommand{\ratpolyq}{\mathfrak{q}}
\theoremstyle{theorem}
\newtheorem{theorem}{Theorem}[section]
\newtheorem{lemma}[theorem]{Lemma}
\newtheorem{corollary}[theorem]{Corollary}
\newtheorem{proposition}[theorem]{Proposition}
\newtheorem{question}[theorem]{Question}
\theoremstyle{definition}
\numberwithin{equation}{section}
\begin{document}
\title{Computable Gelfand Duality}

\author[Burton et al.]{Peter Burton}
\email{pburton@iastate.edu}

\author[]{Christopher J. Eagle}
\email{eaglec@uvic.ca}

\author[]{Alec Fox}
\email{foxag@uci.edu}

\author[]{Isaac Goldbring}
\email{isaac@math.uci.edu}

\author[]{Matthew Harrison-Trainor}
\email{matthar@umich.edu}

\author[]{Timothy H. McNicholl}
\email{mcnichol@iastate.edu}

\author[]{Alexander Melnikov}
\email{alexander.g.melnikov@gmail.com}

\author[]{Teerawat Thewmorakot}
\email{teerawat.thew@hotmail.com}

\thanks{The authors acknowledge the support of the 5-day workshop 23w5055 held at Banff International Research Station where this project was initiated. The second author was supported by NSERC Discovery Grant RGPIN-2021-02459.  The fifth author acknowledges support from the National Science Foundation under Grant No.\ \mbox{DMS-2153823}.}


\begin{abstract}
We establish a computable version of Gelfand Duality. Under this computable 
duality, computably compact presentations of metrizable spaces uniformly effectively correspond to
	computable presentations of unital commutative $C^*$ algebras.
\end{abstract}
\maketitle

\section{Introduction}\label{sec:intro}

One of the most foundational results in the study of operator algebras is the Gelfand Duality Theorem.  By means of this theorem a commutative unital $C^*$ algebra $A$ can
be represented as the $C^*$ algebra of continuous functions from a compact metrizable space $X$ into the field of complex numbers $\C$.
Specifically, $X$ may be chosen to be the \emph{spectrum} $\Delta(A)$ of $A$ (the set of nonzero  
homomorphisms of $A$ into $\C$) with the Gelfand topology (the weakest topology in which all evaluation
maps are continuous).   By the Banach-Stone Theorem, the space $X$ is unique up to homeomorphism.
Here, we present a computable Gelfand duality within the framework of 
effective metric structure theory.  Specifically, we prove the following.  

\begin{theorem}\label{thm:main}
Suppose $X$ is a compact metrizable space.  If $C^*(X)$ is computably presentable, 
then $X$ has a computably compact presentation.
\end{theorem}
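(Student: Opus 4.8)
The plan is to recover $X$ as the character space $\Delta(A)$ of $A:=C^*(X)$ and to present it, via a uniformly computable embedding, as a $\Pi^0_1$ (co-c.e.\ closed) subset of the Hilbert cube, which is then automatically computably compact. Fix a computable presentation of $A$ with computable dense sequence $(a_i)$. From it I would extract a computable sequence $(b_i)$ of self-adjoint elements of norm $\le 1$ whose polynomial combinations with coefficients in $\Q+i\Q$ are dense in $A$: let the $b_i$ enumerate the elements $\Re(a_j)/N_j$ and $\Im(a_j)/N_j$, where $N_j\in\Q_{\ge 1}$ is a computable rational upper bound for the norms of $\Re(a_j)$ and $\Im(a_j)$. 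These are computable points of $A$ because addition, involution and rational scalar multiplication are computable, and the $*$-subalgebra they generate contains each $a_j$, hence is dense. By classical Gelfand duality $X$ is homeomorphic to $\Delta(A)$ with its weak-$*$ topology, which is weak-$*$ compact by Banach--Alaoglu and metrizable since $A$ is separable. As every character $\phi$ maps a self-adjoint $b_i$ into $\mathrm{spec}(b_i)\subseteq[-1,1]$, the map $\iota\colon\Delta(A)\to Q:=\prod_i[-1,1]$, $\iota(\phi)=(\phi(b_i))_i$, is well defined and continuous; it is injective because two characters agreeing on all $b_i$ agree on the dense subalgebra they generate, hence everywhere; and, $\Delta(A)$ being compact and $Q$ Hausdorff, $\iota$ is a homeomorphism onto the compact set $K:=\iota(\Delta(A))$.

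The heart of the argument is to verify that $K$ is a $\Pi^0_1$ subset of the computably compact Hilbert cube $Q$, via the description
\[
K=\bigl\{\,\vec t\in Q:\ |p(\vec t)|\le\|p(\vec b)\|\ \text{for every polynomial }p\text{ with coefficients in }\Q+i\Q\,\bigr\}.
\]
The inclusion "$\subseteq$" is immediate, since a character is a contractive homomorphism fixing the coefficients, so $|p(\vec t)|=|\phi(p(\vec b))|\le\|p(\vec b)\|$. For "$\supseteq$": given such a $\vec t$, the assignment $p(\vec b)\mapsto p(\vec t)$ is well defined on the dense $*$-subalgebra $A_0$ generated by the $b_i$ (if $p(\vec b)=p'(\vec b)$ then $\|(p-p')(\vec b)\|=0$, forcing $|(p-p')(\vec t)|=0$), is linear, multiplicative and unital, and the hypothesis on $\vec t$ makes it contractive, so it extends to a contractive unital homomorphism $\phi\colon A\to\C$, i.e.\ a point of $\Delta(A)$ with $\iota(\phi)=\vec t$. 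This description is $\Pi^0_1$ because: for each polynomial $p$ the element $p(\vec b)$ is obtained from the $b_i$ by the computable algebra operations, so $\|p(\vec b)\|$ is a computable real uniformly in $p$; the map $\vec t\mapsto|p(\vec t)|$ is a computable real-valued function on $Q$ depending on finitely many coordinates; and $\min_{\overline B}|p|$ is a computable real for every rational subbox $B$ of $Q$. Hence $\{\vec t:|p(\vec t)|>\|p(\vec b)\|\}$ is, uniformly in $p$, a c.e.\ open set, and $Q\setminus K$ is the (c.e.) union of these over all $p$.

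Finally, a $\Pi^0_1$ subset of a computably compact computable metric space is itself computably compact and carries a computably compact presentation (the usual lazily-pruned-tree / K\"onig's-lemma construction yields both a computable sequence dense in the set and computable finite $2^{-n}$-nets). Applying this to $K\subseteq Q$ and transporting the presentation along the homeomorphism $\iota$ produces a computably compact presentation of $X$; moreover every step is uniform in the given presentation of $A$, as is needed for the duality asserted in the abstract. The main obstacle I anticipate is the $\Pi^0_1$ step: one must isolate the right finitely-checkable invariant — the family of polynomial norm inequalities — and prove, by the soft extension argument above, that it captures membership in $K$ exactly; the remainder is bookkeeping, together with the standard facts about countable products and closed subsets of computably compact spaces.
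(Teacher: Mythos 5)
There is a genuine gap, and it sits exactly where the paper warns one should expect it. Your reduction to showing that $K=\iota(\Delta(A))$ is a co-c.e.\ closed ($\Pi^0_1$) subset of the Hilbert cube is essentially correct, but the final step fails: it is \emph{not} true that a $\Pi^0_1$ subset of a computably compact space carries a computably compact presentation in the sense required here, namely a computable dense sequence of \emph{its own} points with a computable distance array plus computable finite $2^{-n}$-nets drawn from that sequence. A nonempty $\Pi^0_1$ class in Cantor space with no computable members (e.g.\ the set of consistent completions of PA) is a co-c.e.\ closed subset of a computably compact space containing no computable point whatsoever, so no ``lazily-pruned-tree'' construction can emit a computable dense sequence in it; the obstruction is that the subtree of extendible nodes of a computable tree is in general only $\Pi^0_1$, not computable, which is precisely why K\"onig's lemma is not effective. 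What co-c.e.\ closedness of $K$ does buy is the ability to enumerate finite covers of $K$ by rational balls of the \emph{ambient} cube --- this is the content of Brattka's effective Banach--Alaoglu theorem, and the introduction of the paper explicitly notes that this weaker conclusion does not meet the computable compactness criterion being proved. The whole difficulty of the theorem is concentrated in producing a computable dense sequence of characters of $A$, and that is what the paper's machinery of adequately/well structured vector names (Lemma \ref{lm:adm.gn}, Lemma \ref{lm:adq.well}, Proposition \ref{prop:comp.eval}) accomplishes; your proposal has no substitute for this step.

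Two secondary remarks. First, to evaluate $\norm{p(\vec b)}$ for polynomials with constant term --- which you need, since without the unital constraint the zero functional would satisfy all of your inequalities --- you must have $\unit_A$ as a computable vector of the given presentation; this is true but requires proof (Theorem \ref{thm:comp.unit}) and is the sole source of nonuniformity in the paper, so it cannot be assumed silently. Second, your $\Pi^0_1$ description of $K$ via the inequalities $|p(\vec t)|\leq\norm{p(\vec b)}$, including the extension argument showing every such $\vec t$ arises from a character, is correct and could serve as an alternative starting point --- but only if supplemented by a genuinely new argument locating computable points densely in $K$.
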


Moreover, our proof of Theorem \ref{thm:main} is nearly fully uniform.  Namely, it provides an effective 
operator that given a name of a (not necessarily computable) presentation of a $C^*$ algebra $A$ and a corresponding name of the unit
of $A$, produces a name of a presentation of $X$ and of a total boundedness function for $X$.  For the reader who is unfamiliar with computable presentations of metric structures (such as metric
spaces and Banach spaces), we give precise definitions for the setting of $C^*$ algebras in Section \ref{sec:back}.  These definitions are drawn from the recent work of A. Fox \cite{Fox.2022+}.  
For now, let us conceive of a computable presentation of a metric structure as a dense sequence
by means of which the operations and the metric can be computably approximated.  
A computably compact presentation of a metric space provides enough information to compute
arbitrarily tight covers of the space.  These presentations have a number of important features
not enjoyed by an arbitrary presentation.  For example, they provide for the computation of 
maxima of computable functions.  We discuss names of presentations
in Section \ref{sec:appl.ext}.  Roughly speaking, a name of a presentation is a point in Baire space that 
completely describes the presentation.

We note that the converse of Theorem \ref{thm:main} has been proven by A. Fox \cite{Fox.2022+}.  
\it We thus have a classification of the commutative unital $C^*$ algebras that are computably
presentable.\rm

Part of the significance of our work stems  from our proof of Theorem \ref{thm:main} which
gives a concise demonstration of 
the effectiveness of the Gelfand duality with only the rudiments of operator algebras and 
computable analysis.  The classical proofs of Gelfand duality typically rely on 
the Banach-Alaoglu Theorem.  The most effective version of this theorem that we are aware of is 
due to V. Brattka \cite{Brattka.2008}.  
 This version effectively embeds the closed unit ball of a dual into a computably compact 
presentation of a certain metric space.  However, 
the embedded image does not satisfy the computable compactness criterion considered here.  
Our proof of Theorem \ref{thm:main} is structured so as to avoid the need for 
an effective version of the Banach-Alaoglu Theorem.  In particular, we show that the information provided
by a presentation of $C^*(X)$ can be used to identify the points of $X$ 
by means of a certain family of \emph{vector names} which are judiciously constructed sequences
of vectors of $A$.    
In a sequence of papers, Banaschewski et. al. set forth a constructive proof of Gelfand duality in the 
context of toposes \cite{Banas.Mulvey.1997}, \cite{Banas.Mulvey.2000a}, \cite{Banas.Mulvey.2000b}, \cite{Banas.Mulvey.2006}, \cite{Coquand.Spitters.2009}.   By contrast, our proof of Theorem \ref{thm:main} takes place in 
a fairly concrete setting and requires only a minimal knowledge of computability and functional
analysis.   
   
Theorem \ref{thm:main} also advances the recently emerged program of effective metric structure theory which seeks to understand metric structures (such as Banach spaces) 
from the perspective of computable structure theory, that is, studying which structures have computable
presentations and identifying presentations that are essentially the same, that is, computably isomorphic.
We refer the reader
to the texts by Ash and Knight and Montalban for a much more expansive treatment of 
classical computable structure theory \cite{Ash.Knight.2000}, \cite{Montalban.2021}.  
The origins of effective metric structure theory
go back at least as far as the seminal work of Pour-El and Richards \cite{Pour-El.Richards.1989}.
However, the spark for the fairly recent development of the theory is the 
2013 paper of A. Melnikov \cite{Melnikov.2013}.  The insights in the latter led to a significant amount of work on the 
computable structure theory of Banach spaces, in particular Lebesgue spaces \cite{McNicholl.2017,Clanin.McNicholl.Stull.2019,Brown.McNicholl.2020} 
and Stone spaces \cite{bastone}.  Recently, 
A. Fox has extended this activity to the realm of $C^*$ algebras \cite{Fox.2022+}, and our efforts build on his.  

Perhaps most consequentially, Theorem \ref{thm:main} adds to the list of recently discovered 
\emph{computable dualities} such as the computable Stone duality and the computable Pontryagin duality
\cite{bastone}, \cite{Pontr}.  We discuss these and other computable dualities
in Section \ref{sec:back}.   Computable dualities have already led to the solution of several
open problems \cite{lupini,topsel,tdlc}.    The computability of the Gelfand duality connects 
the computability of $C^*$ algebras with the well-developed area of 
computably compact Polish spaces
\cite{EffedSurvey}.  
One would therefore expect Theorem \ref{thm:main} to lead to 
new discoveries in computable operator algebras.  Indeed, in Section \ref{sec:appl.ext},
 we combine Theorem \ref{thm:main} with known results in computable algebra and topology to produce 
interesting examples of $C^*$ algebras that do not have computable presentations.  

The paper is organized as follows.  Section \ref{sec:back} summarizes background from functional analysis and computable analysis. In Section \ref{sec:prelim.cl},
we attend to a few preliminary matters of a purely classical nature.  In particular,
we introduce the concept of a \emph{vector name} of a point and prove some classical 
properties of such names.  
In Section \ref{sec:prelim.comp}, we present preliminary results on the computability of the unit and related findings on 
the computability of certain post-composition operators.    These properties will then be used in the proof of 
Theorem \ref{thm:main} which is given in Section \ref{sec:proof.main}.
In Section \ref{sec:appl.ext}, we discuss some consequences of Theorem \ref{thm:main}.  We also
discuss its uniformity.  
Section \ref{sec:concl} summarizes our results and presents some directions for future work.

\section{Background}\label{sec:back}

We assume knowledge of the fundamentals of computability theory as expounded in \cite{Cooper.2004}.
Fix an effective enumeration $(\phi_e)_{e \in \N}$ of the computable partial functions from $\N$ into $\N$.

$\C$ is the field of scalars for each $C^*$ algebra considered herein.  $\N$ denotes the set of nonnegative integers.

We follow the computability theory of operator algebras developed by A. Fox  \cite{Fox.2022+}.   This 
framework is an extension of the computability theory for Banach spaces put forth by Pour-El and 
Richards \cite{Pour-El.Richards.1989}.  
  Fix a $C^*$ algebra $A$.    
We say $(A, (v_n)_{n \in \N})$ is a \emph{presentation} of $A$ if 
 $(v_n)_{n \in \N}$ generates a dense subalgebra of $A$.  
 If $A^\# = (A, (v_n)_{n \in \N})$ is a presentation of $A$, then each vector in 
 the subalgebra of $A$ generated by $(v_n)_{n \in \N}$ over the field of rational scalars is a 
 \emph{rational vector} of $A^\#$.
 
 By means of standard techniques, we 
 can generate an effective indexing of the rational vectors of a presentation.  Usually, 
 it is not necessary to provide the details of such an indexing, but for the sake of later developments 
 we will be more precise.   
 Specifically, we index the rational vectors of a presentation by means of rational $*$-polynomials as 
 follows.
 Let $x_0, x_1, \ldots$ be pairwise distinct indeterminants, and let $\mathcal{U}$ denote the free 
$*$-algebra generated by $X = \{x_0, x_1, \ldots \}$ over $\Q(i)$.  
Fix an effective enumeration $(\ratpoly_j)_{j \in \N}$ of $\mathcal{U}$.  By effective, we mean that from $m,n$ we can compute $j,k,s$ so that 
$\ratpoly_j = \ratpoly_m + \ratpoly_n$, $\ratpoly_k = \ratpoly_m\ratpoly_n$, and 
$\ratpoly_s = \ratpoly_m^*$.  
When $A^\# = (A, (u_n)_{n \in \N})$ is a presentation of $A$, and when $\ratpolyq \in \mathcal{U}$, let 
$\ratpolyq[A^\#]$ denote the vector of $A$ obtained from $\ratpoly$ by substituting $u_j$ for 
$x_j$ for each $j \in \N$.  We call $\ratpoly_j[A^\#]$ the \emph{$j$-th rational vector of $A^\#$}.  

 We note that given indices of rational vectors $u$ and $v$, it is possible
 to compute indices of $uv$, $u + v$, and $u^*$.   We also note that this indexing is not necessarily 
 injective, nor can we necessarily effectively determine if two numbers index the same rational vector.  
 
 A presentation $A^\#$ is \emph{computable} if the norm is computable on the rational vectors, that is, there is an algorithm that given $k \in \N$ and an index of a rational vector $v$ of $A^\#$, 
 computes a rational number $q$ so that $|q - \norm{v}| < 2^{-k}$. 
 $A$ is \emph{computably presentable} if it has a computable presentation.  
 
 The \emph{standard presentation} of the $C^*$ algebra $\C$ is defined by setting $v_n = 1$ for all $n \in \N$. The rational 
 vectors of this presentation are precisely the rational points of the plane.  We identify $\C$ with its standard presentation, and no other presentation of $\C$ is considered.
 
 Fix a presentation $A^\#$ of $A$, and let $v_0$ be a vector of $A$.  $v_0$ is a \emph{computable vector} of $A^\#$
 if there is an algorithm that given $k \in \N$ computes a rational vector $v$ of $A^\#$ so that 
 $\norm{v_0 - v} < 2^{-k}$.  
 An index of such an algorithm is an \emph{$A^\#$-index} of $v_0$.  
 A sequence $(u_n)_{n \in \N}$ of vectors of $A$ is a \emph{computable sequence} 
 of $A^\#$ if $u_n$ is a computable vector of $A^\#$ uniformly in $n$, that is, if there is an algorithm
 that given $n,k \in \N$ computes a rational vector $v$ of $A^\#$ so that $\norm{v - u_n} < 2^{-k}$.  
 
We define computability of operators and functionals via names as follows.  An \emph{$A^\#$-name} of $v_0$ is a 
sequence $(u_k)_{k \in \N}$ of rational vectors of $A$ so that 
 $\norm{u_k - v_0} < 2^{-k}$ for all $k \in \N$.  It follows that $v_0$ is a computable vector of $A^\#$ if and 
 only if $v_0$ has a computable name.  Since we identify $\C$ with its standard presentation, and as no other
 presentations of $\C$ are considered, 
we simply refer to a $\C$-name as a name.
 
 Suppose $T$ is an $n$-ary operator on $A$.  We say $T$ is a \emph{computable operator of $A^\#$}
 if there is an oracle Turing machine that given $A^\#$-names of vectors $v_1, \ldots, v_n$ computes an 
 $A^\#$-name of $T(v_1, \ldots, v_n)$.  An \emph{$A^\#$-index} of $T$ is an index of such a machine.  $T$ is \emph{intrinsically computable} if $T$ is a computable operator of every computable 
 presentation of $A$.  $T$ is \emph{uniformly intrinsically computable} if there is an algorithm 
 that given an index of a computable presentation $A^+$ produces an $A^+$-index of $T$.
 It is easily shown that if $T$ is a computable operator of $A^\#$, and if 
 $v_1, \ldots v_n$ are computable vectors of $A^\#$, then $T(v_1, \ldots, v_n)$ is a computable
 vector of $A^\#$.  Furthermore, an $A^\#$-index of $T(v_1, \ldots, v_n)$ can be computed from 
 $A^\#$ indices of $T$, $v_1$, $\ldots$, $v_n$. 
 
 Computability of functionals is defined similarly.  We also define intrinsically computable functionals and uniformly intrinsically
 computable functionals in the same way that we defined intrinsically computable operators and 
 uniformly intrinsically computable operators.   Again, it is easily shown that if $v_0$ is a computable
 vector of $A^\#$, and if $F$ is a computable functional of $A^\#$, then 
 $F(v_0)$ is a computable point of the plane.  Furthermore, from an $A^\#$-index of $v_0$ and 
 an $A^\#$-index of $F$, it is possible to compute an index of $F(v_0)$.
  
 It follows from these definitions that the addition, multiplication, and involution of $A$ are uniformly intrinsically computable operators
 of $A$.  In addition, for each $s \in \Q(i)$, the multiplication-by-$s$ operator is a uniformly intrinsically 
 computable operator of $A$ uniformly in $s$.  The norm is an intrinsically computable functional of $A$.  An additional useful principle is the following.  If 
 $A^\# = (A, (v_n)_{n \in \N})$ is a computable presentation, then a bounded 
 linear functional $F$ on $A$ is computable if $(F(v_n))_{n \in \N}$ is computable.
  
 We will use moduli of convergence to demonstrate the computability of certain limits.  These are 
 defined as follows.  If $(v_n)_{n \in \N}$ is a convergent sequence of vectors of $A$, then 
 a \emph{modulus of convergence for $(v_n)_{n \in \N}$} is a function $g : \N \rightarrow \N$
 so that $\norm{v_m - \lim_n v_n} < 2^{-k}$ whenever $m \geq g(k)$.  
 It is easily shown that if $(v_n)_{n \in \N}$ is a computable sequence of $A^\#$ that has a computable modulus of convergence, then $\lim_n v_n$ is a computable vector of 
 $A^\#$.  

 It is well-known that a computable function $g : \R^n \rightarrow \R$ can be 
 effectively approximated on compacta by rational polynomials.  This principle 
 does not hold for computable functions of one or more complex variables.
 However, as the involution on $\C$ gives access to the real and imaginary 
 parts of a complex number, we can nevertheless effectively approximate 
 a computable $g : \C^n \rightarrow \C$ on compacta by rational $*$-polynomials.
 
 Suppose $(X,d)$ is a complete metric space.  
 A \emph{computable presentation} of $(X,d)$ consists of a dense sequence $(p_n)_{n \in \N}$ of points 
 of $X$ so that the array $(d(p_m,p_n))_{m,n \in \N}$ is computable; that is, there is 
  an algorithm that given $m,n,k \in \N$ computes $q \in \Q$ so that $|q - d(p_m,p_n)| < 2^{-k}$.
  If $X$ is a Polish space, then a computable presentation of $X$ consists of specifying 
  a compatible complete metric $d$ and a computable presentation of $(X,d)$.  
 Suppose $X$ is a Polish space and $X^\# = (X, d, (p_n)_{n \in \N})$ is a computable presentation of $X$.  
 We say that $X^\#$ is \emph{computably compact} if from $k \in \N$ it is possible to compute
 $n_0, \ldots, n_t \in \N$ so that $X \subseteq \bigcup_j B(p_{n_j}; 2^{-k})$.  Since $(X,d)$ is complete, 
 if $X$ has a computably compact presentation, then $X$ is compact.
The terminology `computably compact' was coined by Mori, Tsujii, and Yasugi~\cite{MoriTsujiYasugi}.
As its name suggests, this notion is restricted to compact spaces, but it can be generalized to locally compact spaces; e.g.,~\cite{LocalPauly,lc2,separ}.  
One remarkable feature of this notion is that it is exceptionally robust; at least \emph{nine} equivalent 
formulations of computable compactness can be found in \cite{EffedSurvey,IlKi, Pauly}.

As mentioned in the introduction, Theorem \ref{thm:main} contributes to the program of computable
dualities.  These dualities include the following.
\begin{enumerate}
	\item[(D1)]  A Stone space $B$ has a computably compact presentation if and only if $C(B; \mathbb{R})$ is computably presentable \cite{bastone}.

	\item[(D2)]  A (discrete, countable) torsion-free abelian group $G$ is computably presentable if and only if its connected compact Pontryagin dual  $\widehat{G}$ has a computably compact presentation \cite{Pontr,lupini}.

	\item[(D3)]  If $T$ is a discrete, countable, and torsion Abelian group, then 
	the following are equivalent \cite{Pontr,EffedSurvey}. 
	\begin{enumerate}
		\item $T$ is computably presentable.
		
		\item  The profinite Pontryagin dual $\widehat{T}$ has a computably compact presentation.
		
		\item  $\widehat{T}$ has a recursively profinite presentation.
	\end{enumerate}

	\item[(D4)]  If $B$ is a countable discrete Boolean algebra, then the following
	are equivalent \cite{uptohom,topsel}.
	\begin{enumerate}
		\item  $B$ has a computable presentation.
		
		\item The Stone space $\widehat{B}$ of $B$ has a computably compact presentation.
		
		\item $\widehat{B}$ has a computable presentation.
	\end{enumerate}

	\item[(D5)]  The computably locally compact totally disconnected groups are exactly the duals of the computable (discrete, countable) meet groupoids of their compact cosets \cite{tdlc, separ}.

	\item[(D6)]  A profinite group is recursively presented if and only if 
	it is topologically isomorphic to the Galois group of a computable field extension \cite{MetNer79,LaRothesis,SmithThesis}.
\end{enumerate}

A few further dualities can be found in \cite{newpolish,EffedSurvey,lupini}.  We refer 
the reader to \cite{EffedSurvey} for a rather detailed exposition of some aspects of 
this unified theory. 

\section{Preliminaries from classical analysis}\label{sec:prelim.cl}

Fix a compact metrizable space $X$, and let $A = C^*(X)$.  
A \emph{vector name} of $p \in X$ is a sequence $(f_n)_{n \in \N}$ of vectors of $A$ so that 
$\{p\} = \bigcap_{n \in \N} f_n^{-1}(\frac{1}{2}, \infty)$.  It follows from Urysohn's Lemma that every point of $X$
has a vector name.  Our approach to proving Theorem \ref{thm:main} is to use vector names to identify points. 
However, as we shall see later, not every vector name lends itself to computability; we need to use names that 
have a certain amount of structure.  Accordingly, we define a vector name $(f_n)_{n \in \N}$ to be 
\emph{well structured} if $f_{n+1}^{-1}(\frac{1}{4}, \infty) \subseteq f_n^{-1}(\frac{2}{3}, \infty)$ and $\norm{f_n} \leq 2/3 + 2^{-n}$.  The following lemma captures the feature of well structured
names that we will exploit in Section \ref{sec:proof.main}; namely, it 
will be used to show that if $a \in X$ has a computable well structured name, then the 
evaluation-at-$a$ functional is computable.

\begin{lemma}\label{lm:eval}
Suppose $(f_s)_{s \in \N}$ is a well structured name of $a \in X$, and let $g \in C(X; [0,1])$.
Then, the following are equivalent.
\begin{enumerate}
	\item There exists $s \in \N$ so that 
$\norm{f_s(1 - g)} < \frac{1}{3}$.\label{lm:eval::2}
\item $g(a) > \frac{1}{2}$.\label{lm:eval::1}
	
	\item $\norm{f_s(1 - g)} < \frac{1}{3}$ for all sufficiently large $s \in \N$.\label{lm:eval::3}
\end{enumerate}
\end{lemma}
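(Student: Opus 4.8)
The plan is to prove the cycle $(1)\Rightarrow(2)\Rightarrow(3)\Rightarrow(1)$, numbering the three conditions as in the statement. The implication $(3)\Rightarrow(1)$ is immediate (take $s$ large). For $(1)\Rightarrow(2)$ I would first note that the well-structured hypothesis forces $f_s(a)>\frac{2}{3}$: since $(f_n)_n$ is a name of $a$, we have $a\in f_{s+1}^{-1}(\frac{1}{2},\infty)\subseteq f_{s+1}^{-1}(\frac{1}{4},\infty)\subseteq f_s^{-1}(\frac{2}{3},\infty)$. Because multiplication in $A$ is pointwise and $\norm{\cdot}$ is the supremum norm, $\norm{f_s(1-g)}<\frac{1}{3}$ gives $f_s(a)\,|1-g(a)|\le\norm{f_s(1-g)}<\frac{1}{3}$, so $|1-g(a)|<\frac{1/3}{2/3}=\frac{1}{2}$; as $0\le 1-g(a)\le 1$, this is exactly $g(a)>\frac{1}{2}$.

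The substance is $(2)\Rightarrow(3)$, which I would do in three steps. \emph{(i)} Since $1-g(a)<\frac{1}{2}$ and $g$ is continuous, fix a constant $c$ with $1-g(a)<c<\frac{1}{2}$ and an open set $U\ni a$ on which $1-g<c$. \emph{(ii)} Set $V_s=f_s^{-1}(\frac{1}{4},\infty)$. Using the well-structured inclusion and the continuity of $f_s$, one gets $\overline{V_{s+1}}\subseteq\overline{f_s^{-1}(\frac{2}{3},\infty)}\subseteq f_s^{-1}[\frac{2}{3},\infty)\subseteq V_s$, so $(\overline{V_s})_s$ is a decreasing sequence of closed --- hence compact, since $X$ is compact --- subsets of $X$; moreover $\overline{V_{s+1}}\subseteq f_s^{-1}(\frac{1}{2},\infty)$ for each $s$, so $\bigcap_s\overline{V_s}\subseteq\bigcap_s f_s^{-1}(\frac{1}{2},\infty)=\{a\}$, while $a\in\overline{V_s}$ for every $s$; hence $\bigcap_s\overline{V_s}=\{a\}$. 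As $\{a\}\subseteq U$ with $U$ open, compactness yields an $S$ with $\overline{V_s}\subseteq U$ for all $s\ge S$. \emph{(iii)} For $s\ge S$ bound $|f_s(x)(1-g(x))|$ by cases. If $x\in V_s$ then $x\in\overline{V_s}\subseteq U$, so $0\le 1-g(x)<c$ and $|f_s(x)(1-g(x))|\le\norm{f_s}\,c\le(\frac{2}{3}+2^{-s})c$, which is $<\frac{1}{3}$ once $2^{-s}<\frac{1-2c}{3c}$ (possible as $c<\frac{1}{2}$). If $x\notin V_s$ then $0\le f_s(x)\le\frac{1}{4}$, so $|f_s(x)(1-g(x))|\le\frac{1}{4}<\frac{1}{3}$. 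Taking the supremum over $x$ gives $\norm{f_s(1-g)}<\frac{1}{3}$ for all sufficiently large $s$, that is, $(3)$.

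I expect step (ii) to be the main obstacle: it is where the full well-structured hypothesis (the nesting of the $\frac{1}{4}$- and $\frac{2}{3}$-level sets) is used, and it needs the sandwich $\overline{V_{s+1}}\subseteq f_s^{-1}[\frac{2}{3},\infty)\subseteq V_s$ to get simultaneously that the family decreases and that its intersection is exactly $\{a\}$. The other delicate point is tracking the constants in step (iii) so that the final bound is strictly below $\frac{1}{3}$; this works because on $U$ the factor $1-g$ stays below a constant strictly less than $\frac{1}{2}$ while $\norm{f_s}\le\frac{2}{3}+2^{-s}$, and off $V_s$ the factor $f_s$ is at most $\frac{1}{4}$. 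I am using in step (iii) that the $f_s$ are nonnegative --- automatic on $V_s$, where $f_s>\frac{1}{4}$, and elsewhere the fact that the functions of a vector name may (and in our applications will) be taken to lie in $C(X;[0,1])$; this is what makes the norm bound $\norm{f_s}\le\frac{2}{3}+2^{-s}$ do work.
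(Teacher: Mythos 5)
Your proof is correct and takes essentially the same route as the paper's: the implication $(1)\Rightarrow(2)$ via $f_s(a)>\frac{2}{3}$, and $(2)\Rightarrow(3)$ by combining the norm bound $\norm{f_s}\le\frac{2}{3}+2^{-s}$ on a neighborhood where $1-g$ is bounded below $\frac{1}{2}$ with the Cantor-intersection argument showing the closed sets $\overline{f_s^{-1}(\frac{1}{4},\infty)}$ eventually enter that neighborhood, so that $f_s\le\frac{1}{4}$ off them. Your explicit remarks on the nonnegativity of the $f_s$ and on $f_s(a)>\frac{2}{3}$ only spell out what the paper leaves implicit.
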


\begin{proof}
First suppose $\norm{f_s (1 - g)} < \frac{1}{3}$ for some $s\in \N$.
Since $f_s(a) > \frac{2}{3}$, we must have $g(a) > \frac{1}{2}$.  

Next suppose 
$g(a) > \frac{1}{2}$, and set 
$\epsilon = \frac{1}{2}(g(a) - \frac{1}{2})$.  Let $V = g^{-1}(\frac{1}{2} + \epsilon, \infty)$.
We first show that for all sufficiently large $s$, $f_s(t)(1 - g(t)) < \frac{1}{3}$ when $t \in V$.
To this end, choose $N_0 \in \N$ so that $(\frac{1}{2} - \epsilon)(\frac{2}{3} + 2^{-N_0}) < \frac{1}{3}$.
Suppose $s \geq N_0$ and $t \in V$.  Then, by the definition of $V$, $1 - g(t) < \frac{1}{2} - \epsilon$.
Since $(f_s)_{s \in \N}$ is well structured, $f_s(t) \leq \frac{2}{3} + 2^{-s}$.  Hence, 
$(1 - g(t))f_s(t) < \frac{1}{3}$.  

Now, we show that for all sufficiently large $s$, $(1 - g(t))f_s(t) \leq \frac{1}{4}$ for all 
$t \in X \setminus V$.  First, set $K_s = \overline{f_s^{-1}(1/4, \infty)}$.  Since $(f_s)_{s\in \N}$
is well structured, $K_{s+1} \subseteq K_s$.  Since $(f_s)_{s \in \N}$ names $a$, 
$a \in \bigcap_s K_s$.  However, since $(f_s)_{s \in \N}$ is well structured, 
$K_{s+1} \subseteq f_s^{-1}(1/2, \infty)$, and so $\bigcap_s K_s = \{a\}$.  Thus, 
$\bigcap_s K_s \setminus V = \emptyset$.  By Cantor's Theorem, $K_s \subseteq V$ for all 
sufficiently large $s$.  If $t \in X \setminus V$, and if $K_s \subseteq V$, then $f_s(t) \leq 1/4$ and
so $f_s(t) (1 - g(t)) \leq \frac{1}{4}$.
\end{proof}

We say that a vector name $(f_n)_{n \in \N}$ is \emph{adequately structured}
if $f_{n+1}^{-1}(\frac{1}{2}, \infty) \subseteq f_n^{-1}(\frac{2}{3}, \infty)$.
Adequately structured names will serve as an intermediate step towards constructing
well structured names.  The process of building a well structured name from one
that is adequately structured is as follows.  
For all $t \in \R$, let 
\[
\psi(t) = \left\{
\begin{array}{cc}
\frac{1}{2}t & t < \frac{1}{2}\\
\frac{5}{2}(t - \frac{1}{2}) + \frac{1}{4} & \frac{1}{2} \leq t < \frac{2}{3}\\
t & t \geq \frac{2}{3}\\
\end{array}
\right.
\]
We now have the following lemma. \footnote{We thank Konstantyn Slutsky for suggesting these names and for the proof 
 of Lemma \ref{lm:adq.well}.}

\begin{lemma}\label{lm:adq.well}
If $(f_s)_{s \in \N}$ is an adequately structured name of $a \in X$, then 
$(\min\{\psi \circ |f_s|, 2/3 + 2^{-s}\})_{s \in \N}$ is a well structured name of $a$.
\end{lemma}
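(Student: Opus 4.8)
The plan is to verify, one by one, the three clauses in the definition of a well structured name for the sequence $g_s := \min\{\psi\circ|f_s|,\ 2/3+2^{-s}\}$, by pulling each clause back through $\psi$ to a hypothesis we already have about the adequately structured name $(f_s)_{s\in\N}$. So I would first record the elementary properties of $\psi\colon\R\to\R$: it is piecewise linear, continuous (the three pieces agree at the breakpoints $\frac12$ and $\frac23$, with common values $\frac14$ and $\frac23$ respectively), strictly increasing, and $\psi(0)=0$. It follows that each $g_s$ is a continuous function on $X$ with $0\le g_s\le 2/3+2^{-s}$, so $g_s$ is a genuine vector of $A=C^*(X)$ and $\norm{g_s}\le 2/3+2^{-s}$; this already settles the norm clause.

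Next I would compute the only preimages of $\psi$ that matter. Since $\psi$ is increasing with $\psi(\frac12)=\frac14$, $\psi(\frac23)=\frac23$, and $\psi(\frac35)=\frac12$ (solving $\frac52(t-\frac12)+\frac14=\frac12$ on the middle segment), we get $\psi^{-1}((\frac14,\infty))=(\frac12,\infty)$, $\psi^{-1}((\frac23,\infty))=(\frac23,\infty)$, and $\psi^{-1}((\frac12,\infty))=(\frac35,\infty)$, with $\frac12<\frac35<\frac23$. Because $2/3+2^{-s}$ exceeds $\frac14$, $\frac12$, and $\frac23$, the outer minimum is inert at these levels, so for $c\in\{\frac14,\frac12,\frac23\}$ we have $g_s^{-1}((c,\infty))=|f_s|^{-1}\bigl(\psi^{-1}((c,\infty))\bigr)$. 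Hence $g_{s+1}^{-1}((\frac14,\infty))=f_{s+1}^{-1}((\frac12,\infty))$ and $g_s^{-1}((\frac23,\infty))=f_s^{-1}((\frac23,\infty))$, so the well structured inclusion $g_{s+1}^{-1}((\frac14,\infty))\subseteq g_s^{-1}((\frac23,\infty))$ is literally the adequate structure hypothesis. For the vector-name clause, $g_s^{-1}((\frac12,\infty))=f_s^{-1}((\frac35,\infty))$; since $\frac35>\frac12$ this is contained in $f_s^{-1}((\frac12,\infty))$, so $\bigcap_s g_s^{-1}((\frac12,\infty))\subseteq\bigcap_s f_s^{-1}((\frac12,\infty))=\{a\}$, while conversely adequate structure gives $a\in f_{s+1}^{-1}((\frac12,\infty))\subseteq f_s^{-1}((\frac23,\infty))$ for every $s$, so $f_s(a)>\frac23>\frac35$ and $a$ lies in every term; thus $\bigcap_s g_s^{-1}((\frac12,\infty))=\{a\}$. (This uses that, for the functions occurring in these names, $f_s^{-1}((c,\infty))$ agrees with $|f_s|^{-1}((c,\infty))$, so the level sets of $g_s$ and $f_s$ are comparable; this is automatic under the intended convention that the functions in a vector name are nonnegative.)

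I do not expect a genuine obstacle: the mathematical content is the design of $\psi$, which is handed to us, and the rest is the arithmetic of the thresholds $\frac14<\frac12<\frac35<\frac23$. The one point demanding care is checking that the three linear pieces of $\psi$ really do convert the loose inclusion of adequate structure, $\{t:f_{s+1}(t)>\frac12\}\subseteq\{t:f_s(t)>\frac23\}$, into the tight inclusion of well structure, $\{t:g_{s+1}(t)>\frac14\}\subseteq\{t:g_s(t)>\frac23\}$, while simultaneously keeping $a$ inside every such set and keeping $\norm{g_s}\le 2/3+2^{-s}$; this is exactly what the slopes $\frac12,\frac52,1$ and the clamp by $2/3+2^{-s}$ are tuned to accomplish. (That $\psi$ is moreover Lipschitz will matter later for the effectivity of $s\mapsto g_s$, but it plays no role in this classical lemma.)
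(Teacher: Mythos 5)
Your proof is correct and follows essentially the same route as the paper's: verify the norm bound from the outer minimum, pull the two threshold inclusions back through $\psi$ (using $\psi^{-1}((\tfrac14,\infty))=(\tfrac12,\infty)$ and $\psi^{-1}((\tfrac23,\infty))=(\tfrac23,\infty)$) to reduce them to the adequate-structure hypothesis, and check that $a$ survives in every level set. Your explicit computation of the preimages and the remark about nonnegativity of the $f_s$ only make explicit what the paper leaves implicit.
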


\begin{proof}
Let $\widehat{f}_s = \min\{\psi \circ |f_s|, 2/3 + 2^{-s}\}$. By definition of $\psi$, we have 
$\widehat{f}_{s+1}^{-1}(1/4,\infty) \subseteq \widehat{f}_s^{-1}(2/3, \infty)$.

We now claim that $(\widehat{f}_s)_{s \in \N}$ is a vector name of $a$.  
By the choice of $\psi$, $\widehat{f}_s^{-1}(1/4, \infty) = f_s^{-1}(1/2, \infty)$.  
Thus, $\{a\} = \bigcap_s \widehat{f}_s^{-1}(1/4, \infty)$, and so 
$\bigcap_s \widehat{f}_s^{-1}(1/2, \infty) \subseteq \{a\}$.
At the same time, 
\[
a \in \widehat{f}_{s+1}^{-1}(1/4,\infty) \subseteq \widehat{f}_s^{-1}(2/3, \infty) \subseteq \widehat{f}_s^{-1}(1/2,\infty).
\]
Thus, $a \in \bigcap_s \widehat{f}_s^{-1}(1/2,\infty)$.
By definition, $\norm{\widehat{f}_s} \leq 2/3 + 2^{-s}$. 
Thus, $(\widehat{f}_s)_{s \in \N}$ is well structured.  
\end{proof}

We note that Lemma \ref{lm:eval} fails if $(f_s)_{s \in \N}$ is merely an 
adequately structured name; in particular
the implication of (\ref{lm:eval::2}) by (\ref{lm:eval::1}) fails.

Finally, we will use the following lemma to demonstrate that a 
sequence is dense in $X$ by relating it to the density of a sequence in 
$C(X; [0,1])$.  The proof is a standard argument
via Urysohn's Lemma.  

\begin{lemma}\label{lm:dense}
Suppose $(g_n)_{n \in \N}$ is dense in $C(X; [0,1])$, and fix $r \in (0,1)$.
Furthermore, suppose 
$(p_n)_{n \in \N}$ is a sequence of points of $X$ so that 
for each $n \in \N$, if $\norm{g_n} > r$, then 
there exists $k \in \N$ so that $g_n(p_k) > r$. 
Then $(p_n)_{n \in \N}$ is dense in $X$.
\end{lemma}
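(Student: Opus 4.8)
The plan is to carry out the textbook Urysohn's Lemma argument that the statement advertises; the only subtlety is choosing how good an approximant of a bump function we extract from $(g_n)_{n\in\N}$.

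First I would fix a nonempty open set $U\subseteq X$ and a point $q\in U$; it suffices to produce some $k$ with $p_k\in U$. Since $X$ is metrizable it is normal, and $\{q\}$ and $X\setminus U$ are disjoint closed sets, so Urysohn's Lemma furnishes $g\in C(X;[0,1])$ with $g(q)=1$ and $g\equiv 0$ on $X\setminus U$; in particular, $g(p)>0$ forces $p\in U$, and $\norm{g}=1$.

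Next I would set $\delta=\tfrac{1}{2}\min\{r,\,1-r\}$, which is positive since $r\in(0,1)$ and satisfies both $\delta<1-r$ and $\delta<r$. Using that $(g_n)_{n\in\N}$ is dense in $C(X;[0,1])$, I would choose $n$ with $\norm{g_n-g}<\delta$. Then $g_n(q)>g(q)-\delta=1-\delta>r$, so $\norm{g_n}>r$, and hence the hypothesis on $(p_n)_{n\in\N}$ applies to this very index $n$, producing $k$ with $g_n(p_k)>r$. Consequently $g(p_k)>g_n(p_k)-\delta>r-\delta>0$, whence $p_k\in U$ by the choice of $g$. Since $U$ was an arbitrary nonempty open subset of $X$, this shows $(p_n)_{n\in\N}$ is dense.

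I do not expect any genuine obstacle here; the one point requiring care is to pick $\delta$ small enough on \emph{both} sides — below $1-r$ so that the approximant $g_n$ still exceeds $r$ near $q$ (giving $\norm{g_n}>r$), and below $r$ so that any $p_k$ witnessing $g_n(p_k)>r$ still satisfies $g(p_k)>0$ and therefore lies in $U$. Halving $\min\{r,1-r\}$ settles both requirements simultaneously.
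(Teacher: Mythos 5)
Your proof is correct and follows essentially the same route as the paper: both approximate a Urysohn bump function supported in the target neighborhood by some $g_n$ closely enough that $\norm{g_n}>r$ forces a witness $p_k$, which is then trapped in the neighborhood. The only cosmetic difference is that the paper phrases the trap as $g_n<r$ outside $B(t_0;\epsilon)$ while you phrase it as $g(p_k)>0$ forcing $p_k\in U$; the quantitative choice of approximation error is the same in both.
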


\begin{proof}
Let $t_0 \in X$, and let $\epsilon > 0$.  By Urysohn's Lemma, there is a continuous 
$\lambda : X \rightarrow [0,1]$ so that $\lambda(t) = 1$ when $t \in \overline{B}(t_0; \epsilon/2)$ and 
$\lambda(t) = 0$ when $t \in X \setminus B(t_0; \epsilon)$.  
Hence, there exists $n$ so that $g_n(t) > r$ when $t \in \overline{B}(t_0; \epsilon / 2)$ and 
$g_n(t) < r$ when $t \in X \setminus B(t_0; \epsilon)$. 
Take $k \in \N$ so that $g_n(p_k) > r$.  Then 
$p_k \in B(t_0; \epsilon)$, establishing the desired conclucion.
\end{proof}

\section{Computability-theoretic preliminaries}\label{sec:prelim.comp}

We begin by addressing the computability of the unit.  These considerations will have some
effect on the uniform computability of certain post-composition operators
and in turn will influence the uniformity of Theorem \ref{thm:main}.  

Suppose $A$ is a unital $C^*$ algebra.  We say that $A$ is \emph{computably unital} if 
$\unit_A$ is a computable vector of every computable presentation of $A$.  
We say that $A$ is \emph{uniformly computably unital} if an $A^\#$-index of $\unit_A$ can be 
computed from an index of $A^\#$.  

The following has been proven by A. Fox.  We include a proof for the sake of completeness.

\begin{theorem}\label{thm:comp.unit}
Every commutative unital $C^*$ algebra is computably unital.
\end{theorem}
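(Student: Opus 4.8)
The plan is to exploit the commutativity of $A$ together with the Gelfand picture: a commutative unital $C^*$ algebra is isometrically $*$-isomorphic to $C(X;\C)$ for a compact Hausdorff $X$, and under this identification $\unit_A$ is the constant function $1$. The key observation is that the unit is the unique element $u$ of $A$ satisfying $u = u^*$, $u^2 = u$ (so its spectrum is contained in $\{0,1\}$), and $\norm{u} = 1$ and, moreover, $\norm{u v - v} = 0$ for all $v$ — i.e. $u$ acts as a two-sided identity. What I actually want is a way to \emph{compute} approximations to $\unit_A$ from the norm oracle on rational vectors, so a characterization involving only norms of algebraic combinations of the $v_n$ is what is needed.

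Here is the route I would take. Fix a computable presentation $A^\# = (A,(v_n)_{n\in\N})$. First I would show that from the norm oracle one can, for each rational vector $v$ and each $k$, decide a $2^{-k}$-approximation to the spectral radius of $v^*v$ and more generally approximate $\norm{p(v^*v)}$ for rational polynomials $p$, hence approximate $\norm{h(b)}$ for $b = b^*$ a rational vector and $h$ a computable function on a neighborhood of the spectrum of $b$ — this is the standard continuous functional calculus, made effective using the fact noted in the excerpt that a computable real function is effectively approximable by rational polynomials on compacta. In particular, for a self-adjoint rational vector $b$ with $\norm{b}\le M$, and for $h_\delta$ a continuous function that is $0$ on $[-\delta,\delta]$, $1$ on $[2\delta, M]$ (reflected evenly), I can approximate $h_\delta(b^2) = h_\delta(b)\cdot(\text{sign-like factor})$; the point is to manufacture, from a rational vector $v$ that is close to a "large-support'' element, a rational-vector approximation to something close to $\unit_A$.

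More concretely, the strategy is: since $(v_n)$ generates a dense subalgebra, for each $\epsilon$ there is a rational vector $w$ with $\norm{w} \le 1$ and $\norm{w - e}$ small where $e$ is a positive element with $\norm{e}=1$; applying the effective functional calculus to the self-adjoint rational vector $w^*w$ with the function $t \mapsto \min\{1, t/\tau\}$ for a suitable threshold $\tau$ detected from the norm data, one gets a rational vector $z$ with $z^*z$ close to a positive contraction whose spectrum is close to $\{1\}$, i.e. $z$ is close to something whose "large part'' is all of $\unit_A$; iterating (taking powers, which in $C(X)$ contracts the support toward where the function is $1$, but here we go the other way — we want to saturate toward $1$) and using that in a \emph{commutative} algebra $\norm{ab} = \norm{ba}$ and $\norm{a}$ equals the sup of $|$Gelfand transform$|$, one shows the resulting sequence converges to $\unit_A$ with a norm-computable modulus. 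The commutativity is what makes "the unit is approximable from below by the functional calculus applied to a single large element'' work cleanly; without it one would need to patch together local units.

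The main obstacle I anticipate is \emph{locating a usable threshold effectively}: given only that $(v_n)$ is dense in $A$, I do not a priori know which rational vector is close to a positive element of norm $1$, nor the value $1$ as a spectral data point, from finitely much norm information — I have to search. The fix is the standard one: for each $n$, the norm oracle lets me approximate $\norm{v_n}$, hence find $n$ and a rational scalar multiple $u = \lambda v_n^* v_n$ that is positive with norm within $2^{-k}$ of some value $\ge 1/2$, then rescale; applying $f_\delta(t) = \max\{0,\min\{1,(t-\delta)/\delta\}\}$-type functions and taking the limit gives $\unit_A$, because the only positive contraction $p$ with $p \le \unit_A$, $\norm{p}=1$, and $p$ dominating (in the order) every rational vector's "bulk'' is $\unit_A$ itself — and convergence is witnessed by the decreasing norms $\norm{\unit_A - p_k}$, which I can estimate since $\unit_A - p_k$ is a limit of rational vectors via the functional calculus I built. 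I would present the functional-calculus effectiveness as a short lemma and then assemble the sequence and its modulus of convergence, citing the excerpt's remark that a computable sequence with a computable modulus has a computable limit.
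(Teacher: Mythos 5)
There is a genuine gap, and it sits exactly where you flag your own worry: producing a computable modulus of convergence for your approximate-unit sequence. Your plan is to build $p_k = f_{\delta_k}(b_k)$ by functional calculus and then ``estimate the decreasing norms $\norm{\unit_A - p_k}$ \dots since $\unit_A - p_k$ is a limit of rational vectors via the functional calculus I built.'' But $\unit_A - f_\delta(b) = g(b)$ for a function $g$ with $g(0) = 1 \neq 0$, and the functional calculus for functions not vanishing at $0$ requires a name of $\unit_A$ as an input (this is precisely the distinction the paper draws in Proposition \ref{prop:post.comp}(2) versus (3)); so estimating $\norm{\unit_A - p_k}$ presupposes the very computability of $\unit_A$ you are trying to establish. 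The circularity is not cosmetic: for a single positive contraction $e$ with a zero (e.g.\ $e(x)=x$ on $X=[0,1]$), $\norm{\unit_A - f_\delta(e)} = 1$ for every $\delta>0$, so you must exhaust with a sequence $b_k$ built from all generators, and then the \emph{rate} at which $\norm{\unit_A - f_{\delta_k}(b_k)}\to 0$ depends on how quickly the $b_k$ become uniformly bounded away from $0$ on $X$ --- information that is not extractable from finitely many norm values of rational vectors. Note also that your construction, if it worked, would be fully uniform in the presentation, whereas the paper explicitly observes that its proof is nonuniform and only obtains uniformity under extra hypotheses (finitely many connected components, Proposition \ref{prop:fin.conn}); a uniform proof for arbitrary $X$ would be a strictly stronger claim and should have raised a flag.

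The idea you are missing is that computable unitality only requires an algorithm to \emph{exist} for each computable presentation, so one may hard-code a single finite piece of advice. The paper fixes (nonuniformly) one rational vector $v_0$ with $\norm{v_0 - \unit_A} < \delta_0$ where $\delta_0 = \frac{1}{2}(1 - 2^{-1/2})$, and then, given $k$, searches for a rational vector $v$ with $\norm{v^2 - v} < 2^{-(2k+1)}$ and $\norm{v - v_0} < \delta_0$. The approximate-idempotent condition forces each value $v(t)$ to lie within $\sqrt{\epsilon_0}$ of $0$ or of $1$ (via the two roots of $z^2 - z - c$), and proximity to $v_0$ rules out the value near $0$; hence any $v$ found by the search satisfies $\norm{v - \unit_A} < 2^{-k}$, and the search terminates because $\unit_A$ itself can be approximated by rational vectors. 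This converts the problem into a $\Sigma^0_1$ search with a finitely verifiable success criterion, sidestepping both the functional calculus and the modulus-of-convergence issue entirely. Your observation that the unit is characterized by idempotency is the right germ, but you need the advice vector $v_0$ to break the symmetry between the many idempotent-like elements (indicator functions of clopen sets, and near-zero elements) that the norm data cannot otherwise distinguish.
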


\begin{proof}
Let $A$ be a commutative unital $C^*$ algebra.  
Suppose $A^\#$ is a computable presentation of $A$.  
Let $\delta_0 = \frac{1}{2}(1 - 2^{-1/2})$, and fix a rational vector $v_0$ of $A^\#$ so that 
$\norm{v_0 - \unit_A} < \delta_0$.  

Let $k \in \N$.  It is required to compute a rational vector $v$ so that $\norm{v - \unit_A} < 2^{-k}$.
Set $\epsilon_0 = 2^{-(2k + 1)}$.   
 Search for a rational vector $v$ so that 
$\norm{v^2 - v} < \epsilon_0$ and so that $\norm{v - v_0} < \delta_0$.  Since 
the rational vectors are dense in $A$ and the norm is continuous, it follows that this search terminates.
It remains show that $\norm{v - \unit_A} < 2^{-k}$.  By way of contradiction, suppose 
$\norm{v - \unit_A} \geq 2^{-k}$.  Without loss of generality, suppose 
$A = C^*(X)$ for some compact metrizable space $X$.  Hence, there exists $t_0 \in X$ so that 
$|v(t_0) -1| \geq 2^{-k}$.  Let $\alpha = v(t_0)$, and let $c = v(t_0)^2 - v(t_0)$.  
Let $\beta \in \C$ be the other root of $z^2 - z - c$.  
Thus, $\alpha\beta = -c$ and $\beta = 1 - \alpha$.
Since $|c| < \epsilon_0$, $\min\{|\alpha|, |\beta|\} < \sqrt{\epsilon_0}$.  
However, as $\sqrt{\epsilon_0} < 2^{-k} \leq |\alpha - 1|$, $|\alpha| < \sqrt{\epsilon_0}$.  Hence, 
\begin{eqnarray*}
|v(t_0) - v_0(t_0)| & \geq & ||v(t_0)| - |v_0(t_0)|| \\
& \geq & |v_0(t_0)| - |v(t_0)|\\
& \geq & 1 - \delta_0 - \sqrt{\epsilon_0}\\
& \geq & \delta_0.
\end{eqnarray*}
This is a contradiction.
\end{proof}

The proof of Theorem \ref{thm:comp.unit} is nonuniform.  
As we shall see, our only obstacle to a fully uniform proof of Theorem \ref{thm:main} is the uniform
computability of the unit.  Hence, we now explore how uniform 
can Theorem \ref{thm:comp.unit} be made.  
Our best result in this direction is the following.  

\begin{proposition}\label{prop:fin.conn}
Let $X$ be a compact metrizable space, and suppose $X$ has a finite number of connected components.
Then, $C^*(X)$ is uniformly computably unital.
\end{proposition}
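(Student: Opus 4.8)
The plan is to reduce the problem to computing the unit on each connected component separately and to exploit the fact that, on a connected space, the unit is the \emph{unique} idempotent whose value is not identically zero. More precisely, if $X = X_1 \sqcup \cdots \sqcup X_m$ is the decomposition into connected components, then $C^*(X) \cong \bigoplus_{i=1}^m C^*(X_i)$, and $\unit$ corresponds to the tuple $(\unit_{C^*(X_1)}, \ldots, \unit_{C^*(X_m)})$. The idempotents of $C^*(X)$ are exactly the functions $e_S = \chi_S$ for $S$ a union of connected components, so there are exactly $2^m$ of them, and among the \emph{nonzero} ones the unit is the maximal one (in the partial order $e \le f \iff ef = e$), equivalently the unique idempotent $e$ with $\norm{\unit - e} $ not detectably positive. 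The key observation is that this collection of $2^m$ idempotents is finite and that each is a computable vector: running the search in the proof of Theorem~\ref{thm:comp.unit} (search for a rational vector $v$ with $\norm{v^2 - v}$ small and $\norm{v - v_0}$ small, for each candidate approximate idempotent $v_0$) locates approximations to each idempotent, and the separation of the finitely many idempotents from one another in norm (distinct idempotents $\chi_S \ne \chi_{S'}$ satisfy $\norm{\chi_S - \chi_{S'}} = 1$) means these searches can be made to converge with a computable modulus.

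The concrete steps I would carry out are as follows. First, I would show that from an index of a computable presentation $A^\#$ of $A = C^*(X)$ one can, uniformly, enumerate approximations to all idempotents: search for rational vectors $v$ with $\norm{v^2 - v} < 2^{-n}$; by the argument in the proof of Theorem~\ref{thm:comp.unit}, any such $v$ lies within $2 \cdot 2^{-n/2}$ (say) of a genuine idempotent, and conversely every idempotent is approximated arbitrarily well. Since distinct idempotents are at pairwise norm-distance exactly $1$, for $n$ large enough these approximations cluster into at most $2^m$ clearly-separated groups; because we know \emph{a priori} that the number of idempotents is at most $2^m$ (a fixed finite bound depending only on the given data about $X$), we can wait until exactly some number $r \le 2^m$ of well-separated clusters has appeared and stabilized, and we can certify stability using the $\tfrac13$-gap just as in Lemma~\ref{lm:eval}-style reasoning. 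This yields computable names for each of the finitely many idempotents $e_1, \ldots, e_r$, uniformly in the index of $A^\#$. Second, among $e_1, \ldots, e_r$ I would identify the unit: $\unit_A = \sum_{\text{minimal }} e_j$ is characterized as the unique idempotent $e$ such that $e e_j = e_j$ for all $j$, i.e.\ the top element; since multiplication is a computable operator and we have names for all the $e_j$, and since the relation "$e e_j = e_j$" can be decided for idempotents (the error $\norm{e e_j - e_j}$ is either $0$ or $\ge 1$, so a single sufficiently accurate computation decides it), we can computably pick out which $e_j$ is the unit, and output its name. Combining, this produces an $A^\#$-index of $\unit_A$ from an index of $A^\#$, which is exactly uniform computable unitality.

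I expect the main obstacle to be the \emph{clustering/termination} step: making rigorous that the searches for approximate idempotents can be organized so that, after finitely many stages, one knows one has found \emph{all} the idempotents and has them correctly separated into clusters. The a priori bound $2^m$ on the number of connected components (hence on the number of idempotents) is what makes this possible, and it must be fed in as part of the input — this is precisely why the proposition is restricted to spaces with finitely many components and why, in contrast, Theorem~\ref{thm:comp.unit} is nonuniform in general. One delicate point is that the number $m$ of components, and even the number $r$ of idempotents actually realized, may not be something we can compute from $A^\#$ alone; so I would be careful to state the result as: given an index of $A^\#$ \emph{together with} a bound on the number of connected components of $X$ (or, since the proposition as stated takes $X$ with finitely many components as given, treating that bound as part of the ambient data), the $A^\#$-index of $\unit_A$ can be computed. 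The remaining steps — invoking the quantitative idempotent-perturbation estimate from the proof of Theorem~\ref{thm:comp.unit}, the exact norm-$1$ separation of distinct characteristic functions, and the decidability of $e e_j = e_j$ for genuine idempotents — are routine once the combinatorial bookkeeping is set up.
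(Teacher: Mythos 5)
Your route is genuinely different from the paper's. The paper searches directly for $n_0$ rational vectors that are approximately idempotent, have norm close to $1$, and have pairwise small products, and then proves via an ``approximately supported on a component'' analysis that their \emph{sum} is close to $\unit_A$; it never enumerates the full idempotent lattice. You instead propose to locate \emph{all} idempotents of $C^*(X)$ and pick out the top one in the order $e \le f \iff ef = e$. Both approaches need the exact number $m$ of components as ambient data, which is legitimate here since uniform computable unitality only requires uniformity in the index of the presentation, not in $X$. Your quantitative ingredients are sound: if $\norm{v^2 - v} < \epsilon$ with $\sqrt{\epsilon} < \frac{1}{2}$, then $v(t)$ lies within $\sqrt{\epsilon}$ of $\{0,1\}$ for every $t$, the set $\{t : |v(t)-1| < \sqrt{\epsilon}\}$ is clopen, so $v$ is within $\sqrt{\epsilon}$ of a genuine idempotent $\chi_S$; distinct idempotents are at distance exactly $1$; and $\norm{e e_j - e_j} \in \{0,1\}$ for idempotents, so the order is decidable from sufficiently good names. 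Where the paper's argument buys a shorter search (only $n_0$ vectors rather than $2^{n_0}$ clusters), yours buys a cleaner identification of the unit once the idempotents are in hand.

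The one genuine gap is your termination and completeness certification. You correctly state at the outset that the idempotents are \emph{exactly} the $\chi_S$ for $S$ a union of components, hence that there are \emph{exactly} $2^m$ of them (each component is clopen because there are finitely many and components are closed), but you then retreat to ``at most $2^m$'' and propose to wait until some number $r \le 2^m$ of clusters ``has appeared and stabilized.'' Stabilization is not certifiable: if you have found $r < 2^m$ separated approximate idempotents, no finite amount of further searching rules out an as-yet-unapproximated idempotent, and if you stop early you may crown $\chi_S$ for a proper clopen $S \subsetneq X$ as the top element. The $\frac{1}{3}$-gap of Lemma \ref{lm:eval} has no bearing on this issue; that lemma concerns vector names of points, not idempotents. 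The repair is to use the exact count that you already derived: search until you have found $2^m$ rational vectors $v$ with $\norm{v^2-v}$ small and pairwise distances greater than $\frac{1}{2}$. This search terminates because all $2^m$ idempotents exist and are approximable by rational vectors, and on termination you provably hold one approximation per idempotent, after which your refinement and top-element steps go through. Note that this requires the exact $m$ rather than merely an upper bound (with a strict overestimate $M > m$ you would wait for $2^M$ clusters forever), which matches the paper's own use of the exact component count in Corollary \ref{cor:conn.unif}.
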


\begin{proof}
Let $A = C^*(X)$.  Let $n_0$ denote the number of connected components of $X$.  
Let $C_1, \ldots, C_{n_0}$ denote the connected components of $X$.  Fix a computable presentation 
$A^\#$ of $A$.  Let $k \in \N$, and suppose it is required to compute a rational vector $v_0$ 
of $A^\#$ so that $\norm{v_0 - \unit_A} < 2^{-k}$.  
Let:
\begin{eqnarray*}
k_0 & = & k + 3\\
\epsilon_1 & = & \frac{1}{2} \min\{(1 - 2^{-k_0}n_0)^2, 2^{-(k_0 + 1)}/n_0\}\\
\epsilon_2 & = & \frac{1}{2} \min\{\frac{1}{4}, (2^{-k_0}/n_0)^2\}.
\end{eqnarray*}
Search for rational vectors $v_1, \ldots, v_{n_0}$ that satisfy the following conditions:  
\begin{enumerate}
	\item $|\norm{v_n} - 1| < \frac{1}{2}$.\label{c1}

	\item $\norm{v_mv_n} < \epsilon_1$ when $m \neq n$.\label{c2}
	
	\item $\norm{v_n^2 - v_n} < \epsilon_2$.\label{c3}
\end{enumerate}
Thus, $\norm{v_n} > \frac{1}{2}$.  Set $v = \sum_{n = 1}^{n_0} v_n$.

Let $I_n$ denote the indicator function of $C_n$.  By letting $v_n$ approach $I_n$ for each $n$, it is seen that this 
search terminates.  It only remains to show $\norm{v - \unit_A} < 2^{-k}$.  

Let us say that $v_m$ is \emph{approximately supported on $C_n$} if 
$|v_m(t) - 1| < 2^{-k_0}/n_0$ for all $t \in C_n$.  
We first
claim that for each $m \in \{1, \ldots, n\}$, there exists $n \in \{1, \ldots, n_0\}$ so that 
$v_m$ is approximately supported on $C_n$.   Towards this end, take $t_0 \in X$ so that 
$|v_m(t_0)| > \frac{1}{2}$, and take the unique $n \in \{1, \ldots, n_0\}$ so that $t_0 \in C_n$.

For each $t \in C_n$, let $c(t) = v_m(t)^2 - v_m(t)$.  For each $t \in C_m$, let 
$\alpha(t) = v_m(t)$, and let $\beta(t)$ be the other root of $z^2 - z - c(t)$.  Thus, 
$\alpha(t) + \beta(t) = 1$, and $\alpha(t)\beta(t) = -c(t)$.  By (\ref{c3}), $|c(t_0)| < \epsilon_2$.
Since $|v_m(t_0)| > \frac{1}{2}$, $|v_m(t_0)| > \sqrt{\epsilon_2}$. 
Thus, $|1 - v_m(t_0)| = |\beta(t_0)| < \sqrt{\epsilon_2} < 2^{-k}/n_0$.  

By way of contradiction, suppose $t_1 \in C_n$ and $|1 - v_m(t_1)| \geq 2^{-k}/n_0$.  
Thus, $|\beta(t_1)| = |1 - v_m(t_1)| > \sqrt{\epsilon_2}$.  Hence, $|\alpha(t_1)| = |v_m(t_1)| < \sqrt{\epsilon_2}$.  
Since $|v_m(t_0)| > \frac{1}{2} > \sqrt{\epsilon_2}$, by connectedness there exists $t_2 \in C_n$ so that 
$1/2 > |v_m(t_2)| > \sqrt{\epsilon_2}$.  Thus, $|1 - v_m(t_2)| > \frac{1}{2}$.  
Putting all this together, we obtain $|v_m(t_2)(v_m(t_2) - 1)| > \sqrt{\epsilon_2}\cdot\frac{1}{2} > \epsilon_2$, yielding
a contradiction. 

Now, we claim that for each $m \in \{1, \ldots, n_0\}$, there is exactly one $n \in \{1, \ldots, n_0\}$ 
so that $v_m$ is approximately supported on $C_n$.
By way of contradiction suppose otherwise.  By our first claim and the pigeonhole principle, 
there exist $m,m',n \in \{1, \ldots, n_0\}$ so that 
$m \neq m'$ and $v_m$, $v_{m'}$ are approximately supported on $C_n$.  
Then, $\min\{|v_m(t_0)|, |v_{m'}(t_0)|\} > 1 - 2^{-k}/n_0$.
So, $|v_m(t_0) v_{m'}(t_0)| > \epsilon_1$.  But, $\norm{v_m v_{m'}} < \epsilon_1$- a contradiction.

We now conclude there is an injective map $j : \{1, \ldots, n_0\} \rightarrow \{1, \ldots, n_0\}$
so that $v_m$ is approximately supported on $C_{j(m)}$ for each $m \in \{1, \ldots, n_0\}$.
Hence, $j$ is surjective and $\unit_A = \sum_{m = 1}^{n_0} I_{j(m)}$.  
Therefore, 
\begin{eqnarray*}
\norm{v - \unit_A} & \leq & \sum_{m = 1}^{n_0} \norm{v_m - I_{j(m)}}
\end{eqnarray*}
We estimate $\norm{v_m - I_{j(m)}}$ as follows. Let $m \in \{1, \ldots, n_0\}$.  If $t \in C_{j(m)}$, then 
$|v_m(t) - I_{j(m)}(t)| < 2^{-k_0}/n_0$.
Suppose $t \not \in C_{j(m)}$.  Let $t \in C_n$, and let $m' = j^{-1}(n)$.  Thus $m' \neq m$. 
By definition of $j$, $|v_{m'}(t) - 1| < 2^{-k_0}/n_0$.  
Thus, by (\ref{c1})
\begin{eqnarray*}
|v_m(t) v_{m'}(t) - v_m(t)| & \leq & (1 + \frac{1}{2})(2^{-k_0}/n_0)\\
& < & 2^{-k_0 + 1}/n_0.
\end{eqnarray*}
Hence, 
\begin{eqnarray*}
|v_m(t)| & \leq & |v_m(t) - v_m(t)v_{m'}(t)| + |v_m(t) v_{m'}(t)| \\
& \leq & 2^{-k_0 + 1}/n_0 + \epsilon_1.
\end{eqnarray*}
Thus, $\norm{v_m - I_{j(m)}} < 2^{-k_0 + 1}/ n_0 + \epsilon_1$.  
Hence, 
\[
\norm{v - \unit_A} \leq 2^{-k_0 + 1} + n_0 \epsilon_1 < 2^{-k_0 + 2} < 2^{-k}.
\]
\end{proof}

We now turn to the computability of post-composition operators.  We start with post-composition
operators induced by rational $*$-polynomials.  We summarize our results in the following proposition.
We believe these findings are simple enough so as not to require a formal proof.  At the same time, 
we believe they are useful enough to warrant a formal statement. 
 
 \begin{proposition}\label{prop:post.poly}
 Suppose $A = C^*(X)$, and let $A^\#$ be a computable presentation of $A$.  
 \begin{enumerate}
 	\item If $p : \C^n \rightarrow \C$ is a rational $*$-polynomial, then the 
  post-composition operator $(f_1, \ldots, f_n) \mapsto p(f_1, \ldots, f_n)$
  is a computable operator of $A^\#$.  

        \item In addition, if $p(\vec{0}) = 0$, then an index of this operator can be computed from 
        $p$ and an index of $A^\#$. 

        \item If $p(\vec{0}) \neq 0$, then an index of this operator can be computed from 
        $p$, an index of $A^\#$, and an $A^\#$-index of $\unit_A$.  
 \end{enumerate}
 \end{proposition}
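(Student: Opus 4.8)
The plan is to proceed by induction on the structure of $p$ as an element of the free $*$-algebra $\mathcal{U}$, using that the algebraic operations of $A$ are already known to be computable with effectively obtainable indices. Write $p$ as a finite $\Q(i)$-linear combination of $*$-monomials in the indeterminates $x_1,\ldots,x_n$, where a $*$-monomial is a finite (possibly empty) product of factors each of which is some $x_j$ or some $x_j^*$. Since $A = C^*(X)$ is commutative and its involution is pointwise complex conjugation, the function $t \mapsto p(f_1(t),\ldots,f_n(t))$ is exactly the element of $A$ obtained from the $*$-algebra expression for $p$ by substituting $f_j$ for $x_j$ and the involution $f_j^*$ for $x_j^*$, the empty monomial being interpreted as $\unit_A$. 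Thus $p(f_1,\ldots,f_n)$ is assembled from $f_1,\ldots,f_n$ (and, when a constant term is present, from $\unit_A$) by finitely many applications of addition, multiplication, involution, and multiplication by a fixed scalar in $\Q(i)$.

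First I would record the relevant facts from Section~\ref{sec:back}: addition, multiplication, and involution are uniformly intrinsically computable operators of $A$; for each $s \in \Q(i)$ the map $f \mapsto sf$ is a computable operator of $A$, uniformly in $s$; and, immediately from the name-based definition, a composite of computable operators of $A^\#$ is again a computable operator of $A^\#$, with an index of the composite computable from indices of its constituents. Combining these by induction on the construction of $p$ yields that $(f_1,\ldots,f_n) \mapsto p(f_1,\ldots,f_n)$ is a computable operator of $A^\#$, which is part~(1). Concretely: each $*$-monomial gives a computable operator (an iterated product of the $f_j$ and $f_j^*$, the empty monomial giving the constant operator with value $\unit_A$); multiplying by the corresponding $\Q(i)$-coefficient preserves computability; and summing the finitely many resulting operators preserves it as well.

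For the uniformity claims one only has to track which of these ingredients carry parameters. If $p(\vec{0}) = 0$ then $p$ has no constant term, so $\unit_A$ never enters the construction; every operator used has an index computable from an index of $A^\#$ alone, and the finite recipe for combining them is read off directly from $p$, giving part~(2). If $p(\vec{0}) = c \neq 0$, write $p = p_0 + c$ with $p_0(\vec{0}) = 0$; the $p_0$-part is handled as in part~(2), while the constant term contributes $c\,\unit_A$, an $A^\#$-index of which is computable from $c$ and an $A^\#$-index of $\unit_A$ by uniform computability of scalar multiplication. Adding the two pieces yields part~(3). The only point requiring care — and the reason the statement splits into three clauses — is precisely this appearance of the unit in the constant term: without it the construction needs nothing about $A^\#$ beyond its own index, whereas a nonzero constant term forces one to be supplied with a name of $\unit_A$. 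I expect no genuine obstacle beyond this bookkeeping, which is why a formal proof is omitted.
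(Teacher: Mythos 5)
Your argument is correct and is exactly the routine induction the paper has in mind: the authors explicitly omit a formal proof of this proposition as "simple enough," and your decomposition into $*$-monomials combined with the uniform intrinsic computability of addition, multiplication, involution, and scalar multiplication (plus the observation that only a nonzero constant term forces an appearance of $\unit_A$, accounting for the split between clauses (2) and (3)) is precisely the intended justification. No gap.
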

 
We now use Proposition \ref{prop:post.poly} to establish the conditions under which a post-composition
 operator is computable.  

\begin{proposition}\label{prop:post.comp}
 Suppose $A = C^*(X)$, and let $A^\#$ be a computable presentation of $A$.  
 \begin{enumerate}
 	\item If $g : \C^n \rightarrow \C$ is computable, then the 
  post-composition operator $(f_1, \ldots, f_n) \mapsto g(f_1, \ldots, f_n)$
  is a computable operator of $A^\#$.  

        \item In addition, if $g(\vec{0}) = 0$, then an index of this operator can be computed from 
        an index of $g$ and an index of $A^\#$. 

        \item If $g(\vec{0}) \neq 0$, then an index of this operator can be computed from 
        an index of $g$, an index of $A^\#$, and an $A^\#$-index of $\unit_A$.  
 \end{enumerate}
\end{proposition}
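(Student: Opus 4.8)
The plan is to reduce to Proposition \ref{prop:post.poly} by using the remark, made just above the proposition, that a computable function $g : \C^n \to \C$ can be effectively approximated on compact polydiscs by rational $*$-polynomials. The post-composition operator for $g$ will be realized by an oracle Turing machine that, on $A^\#$-names of $f_1, \ldots, f_n$ and a target precision $k$, first localizes the joint range of the $f_i$ inside a rational polydisc, then replaces $g$ by a sufficiently accurate rational $*$-polynomial on that polydisc, and finally applies the corresponding polynomial post-composition operator from Proposition \ref{prop:post.poly}.

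Concretely, given $A^\#$-names of $f_1, \ldots, f_n$, I would read off the leading rational vector of each name and, using the computability of the norm on rational vectors of $A^\#$, compute rational numbers $M_1, \ldots, M_n$ with $\norm{f_i} \le M_i$ for each $i$. Set $K = \prod_{i=1}^n \overline{D}(0; M_i) \subseteq \C^n$; then $(f_1, \ldots, f_n) : X \to \C^n$ has range contained in $K$, so for any $h : \C^n \to \C$ we have $\norm{h(f_1, \ldots, f_n)} \le \sup_{z \in K} |h(z)|$. Fixing $k \in \N$, use the effective polynomial approximation (applied to an index of $g$, the polydisc $K$, and precision $2^{-(k+1)}$) to compute a rational $*$-polynomial $p$ with $\sup_{z \in K} |g(z) - p(z)| < 2^{-(k+1)}$, so that $\norm{g(f_1,\ldots,f_n) - p(f_1,\ldots,f_n)} < 2^{-(k+1)}$. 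By Proposition \ref{prop:post.poly}, $p(f_1,\ldots,f_n)$ is a computable vector of $A^\#$, and running the polynomial post-composition operator on the given names yields a rational vector $v$ with $\norm{v - p(f_1,\ldots,f_n)} < 2^{-(k+1)}$; hence $\norm{v - g(f_1,\ldots,f_n)} < 2^{-k}$. Letting $k$ vary produces an $A^\#$-name of $g(f_1,\ldots,f_n)$, which gives part (1).

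For parts (2) and (3) I would inspect the uniformity of this construction. An index of the oracle machine just described can be computed from an index of $g$ and an index of $A^\#$, together with whatever data Proposition \ref{prop:post.poly} requires for the polynomial $p$ that arises. In the general case $p$ may have nonzero constant term, so Proposition \ref{prop:post.poly}(3) contributes a dependence on an $A^\#$-index of $\unit_A$, giving (3). When $g(\vec{0}) = 0$, I would instead compute $p$ with error $< 2^{-(k+2)}$ on $K$ and then replace $p$ by $p - p(\vec{0})$; this is again a rational $*$-polynomial (the constant term of a rational $*$-polynomial is a rational complex number, so it can be subtracted off without leaving the class), it has zero constant term, and since $|p(\vec{0})| = |p(\vec{0}) - g(\vec{0})| < 2^{-(k+2)}$ it still approximates $g$ on $K$ to within $2^{-(k+1)}$. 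Now Proposition \ref{prop:post.poly}(2) applies and needs only $p$ and an index of $A^\#$, giving (2).

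I do not expect a serious obstacle here; the construction is a routine "approximate-then-apply-the-polynomial-case" argument. The one point that requires care is the uniformity bookkeeping, and in particular the observation used in part (2) that the constant term of a rational $*$-polynomial is itself rational, so that normalizing it to $0$ keeps us inside the class of rational $*$-polynomials and thereby removes the need for the unit.
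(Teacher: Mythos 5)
Your proposal is correct and follows essentially the same route as the paper: bound the joint range of $f_1,\ldots,f_n$ in a compact polydisc using the names and the computable norm, effectively approximate $g$ there by rational $*$-polynomials, and apply Proposition \ref{prop:post.poly}, with the unit needed only when the approximating polynomials have nonzero constant term. Your explicit normalization $p \mapsto p - p(\vec{0})$ in the case $g(\vec{0})=0$ is just a spelled-out version of the paper's remark that the $p_k$ can be chosen with $p_k(\vec{0})=0$.
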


\begin{proof}
Given $A^\#$-names of $f_1, \ldots, f_n \in A$, we compute an $A^\#$-name of 
$g(f_1, \ldots, f_n)$ as follows.
First, compute a positive integer $M$ so that $\max\{\norm{f_1}, \ldots, \norm{f_n}\} < M$.  
Let $R_M = \{(z_1, \ldots, z_n) \in \C^n\ :\ |z_j| \leq M\}$.  

We then compute a sequence $(p_k)_{k \in \N}$ of rational $*$-polynomials so that 
$|p_k(q) - g(q)| < 2^{-k}$ whenever $q \in R_M$.  Thus, for each $k \in \N$,  
$\norm{p_k(f_1, \ldots, f_n) - g(f_1, \ldots, f_n)} < 2^{-k}$.   
By Proposition \ref{prop:post.poly}, 
it follows that we may compute for each $k$ a rational vector 
$u_k$ so that $\norm{u_k -p_{k+1}(f_1, \ldots, f_n)} < 2^{-(k+1)}$.  Thus, $(u_k)_{k \in \N}$ is an 
$A^\#$-name of $g(f_1, \ldots, f_n)$. 

By inspection, all of the steps in the above procedure are uniform in $n$, an index of $g$, 
an index of $A^\#$, and an $A^\#$-index of $\unit_A$.  
If $g(\vec{0}) = 0$, then we can choose $p_k$ so that $p_k(\vec{0}) = 0$.  
Hence, in this case, only an index of $g$ and an index of $A^\#$ are required to compute
an index of the post-composition operator induced by $g$. 
\end{proof}

\begin{corollary}\label{cor:inv.abs.mx.mn}
If $X$ is a compact metrizable space, then $|\ |$, $\max$, and $\min$ are uniformly intrinsically 
computable operators of $C^*(X)$.
\end{corollary}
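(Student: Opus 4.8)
The plan is to realize each of $|\ |$, $\max$, and $\min$ as a post-composition operator induced by a fixed computable function on $\C^n$ that vanishes at the origin, and then invoke part (2) of Proposition \ref{prop:post.comp}.

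First I would fix the inducing functions. For the unary operator $|\ |$, take $g_{\mathrm{abs}} : \C \to \C$, $g_{\mathrm{abs}}(z) = |z|$; this is computable since, as noted in Section \ref{sec:back}, the involution on $\C$ gives effective access to $\Re z$ and $\Im z$, and $|z| = \sqrt{(\Re z)^2 + (\Im z)^2}$ is then obtained by composing computable real operations. For $\max$ and $\min$, define $g_{\max}, g_{\min} : \C^2 \to \C$ by $g_{\max}(z,w) = \tfrac{1}{2}(z + w + |z - w|)$ and $g_{\min}(z,w) = \tfrac{1}{2}(z + w - |z - w|)$; these are computable because they are built from addition, subtraction, scalar multiplication by $\tfrac{1}{2}$, and $g_{\mathrm{abs}}$, and when restricted to real-valued functions they induce the usual pointwise maximum and minimum, so the post-composition operators they induce on $C^*(X)$ are precisely $\max$ and $\min$. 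Each of $g_{\mathrm{abs}}$, $g_{\max}$, $g_{\min}$ sends $\vec{0}$ to $0$.

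Next, given a compact metrizable $X$ and a computable presentation $A^\#$ of $A = C^*(X)$, Proposition \ref{prop:post.comp}(2) applies to each of these $g$: since $g(\vec{0}) = 0$, an $A^\#$-index of the post-composition operator $(f_1,\dots,f_n) \mapsto g(f_1,\dots,f_n)$ can be computed from an index of $g$ together with an index of $A^\#$. Because the three functions $g_{\mathrm{abs}}$, $g_{\max}$, $g_{\min}$ are fixed — we may hard-wire indices for them — this exhibits an algorithm that, given an index of any computable presentation of $C^*(X)$, outputs an index for the corresponding operator; that is exactly uniform intrinsic computability.

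There is essentially no hard step here: the content is entirely in Proposition \ref{prop:post.comp}, and the only thing to check by hand is that $g_{\mathrm{abs}}$, $g_{\max}$, $g_{\min}$ are computable and vanish at $\vec{0}$. The one point worth flagging is that one should use part (2), not part (3), of Proposition \ref{prop:post.comp}: because all three inducing functions vanish at the origin, no $A^\#$-index of $\unit_A$ is needed, which is what keeps the construction fully uniform and avoids the nonuniformity discussed around Theorem \ref{thm:comp.unit}.
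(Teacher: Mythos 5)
Your proposal is correct and is essentially the argument the paper intends: the corollary is stated immediately after Proposition \ref{prop:post.comp} with no separate proof, precisely because $|\ |$, $\max$, and $\min$ are induced by the fixed computable functions $z \mapsto |z|$ and $(z,w) \mapsto \tfrac{1}{2}(z+w\pm|z-w|)$, all vanishing at the origin, so part (2) applies and no index of $\unit_A$ is needed. Your explicit check that part (2) rather than part (3) suffices is exactly the point that makes the operators \emph{uniformly} intrinsically computable.
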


\section{Proof of Theorem \ref{thm:main} }\label{sec:proof.main}

Suppose $X$ is a compact metrizable space, and let $A =C^*(X)$.  
For each $p \in X$ and $f \in A$, let $\widehat{p}(f) = f(p)$, so $\widehat{p}$ is the evaluation-at-$p$ functional.  
 
Fix a computable presentation 
$A^\#$ of $A$.  In order to simplify exposition, throughout this section, all computability is referent to this particular presentation.  Consequently, 
when we say that a vector, sequence, or operator is computable, we mean that it is a computable 
vector, sequence, or operator of $A^\#$.  

By standard techniques, we may compute an effective and injective enumeration
$(\kappa_n)_{n \in \N}$ of a dense sequence of rational vectors of $A^\#$.  
For all $a,b \in X$, let 
\[
d(a,b) = \sum_{n \in \N} 2^{-n} \frac{|\kappa_n(a) - \kappa_n(b)|}{1 + |\kappa_n(a) - \kappa_n(b)|}.
\]
It is well-known that $d$ is a metric that is compatible with the topology of $X$. 
For all $p,q \in X$, let $D_p(q) = d(p,q)$.  Thus, $D_p \in A$ 
for each $p \in X$. 

Our goal now is to build a computably compact presentation of $(X,d)$.  
The following theorem reduces the complexity of this task while
also motivating the method of our proof.

\begin{theorem}\label{thm:comp.fnctl}
Suppose $(a_n)_{n \in \N}$ is a dense sequence of points of $X$ so that 
$\widehat{a_n}$ is a computable functional uniformly in $n$.  
Then, $(D_{a_n})_{n \in \N}$ is a computable sequence of vectors and 
$(X, d, (a_n)_{n \in \N})$ is a computably compact presentation of $X$.
\end{theorem}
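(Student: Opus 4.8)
The plan is to verify the three assertions in turn: that $(D_{a_n})_{n\in\N}$ is a computable sequence of vectors, that the distance array $(d(a_m,a_n))_{m,n\in\N}$ is computable, and that from $k\in\N$ one can compute a finite $2^{-k}$-cover of $X$ by balls centered at points of $(a_n)_{n\in\N}$. The first two are fairly routine given the machinery of Sections~\ref{sec:prelim.cl} and~\ref{sec:prelim.comp} (and the computability of the unit from Theorem~\ref{thm:comp.unit}); the real content is in the third.

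For the first assertion I would write $D_{a_n}$ as a rapidly convergent series of post-compositions. Let $g\colon\C\to\C$ be the computable map $g(z)=|z|/(1+|z|)$, which satisfies $g(0)=0$. By Theorem~\ref{thm:comp.unit}, $\unit_A$ is a computable vector of $A^\#$, and since $\kappa_m(a_n)=\widehat{a_n}(\kappa_m)$ is a computable point of the plane uniformly in $m,n$ (feed the canonical name of the rational vector $\kappa_m$ to the functional $\widehat{a_n}$), the vector $\kappa_m-\kappa_m(a_n)\unit_A$ is computable uniformly in $m,n$; post-composing with $g$ via Proposition~\ref{prop:post.comp} shows the same for $g(\kappa_m-\kappa_m(a_n)\unit_A)$. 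Now
\[
D_{a_n}=\sum_{m\in\N}2^{-m}\,g\!\left(\kappa_m-\kappa_m(a_n)\,\unit_A\right),
\]
and since $|g|\le 1$ each summand has norm at most $2^{-m}$, so the partial sums of this series are computable uniformly in $n$ and converge to $D_{a_n}$ with the uniform modulus of convergence $k\mapsto k+1$. Hence $(D_{a_n})_{n\in\N}$ is a computable sequence of $A^\#$. The second assertion then follows at once, since $d(a_m,a_n)=\widehat{a_n}(D_{a_m})$ and $\widehat{a_n}$ is a computable functional uniformly in $n$; together with the hypothesis that $(a_n)_{n\in\N}$ is dense and the fact that $d$ is a compatible metric on the compact, hence complete, space $X$, this gives that $(X,d,(a_n)_{n\in\N})$ is a computable presentation of $X$.

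For computable compactness the key idea is to use the norm of a suitable computable vector as a uniform ``maximum over $X$'' oracle. Given $N\in\N$, set $\rho_N=\min\{D_{a_0},\ldots,D_{a_N}\}$; by Corollary~\ref{cor:inv.abs.mx.mn} and the first assertion, $\rho_N$ is a computable vector uniformly in $N$, and since the norm is an intrinsically computable functional, $\norm{\rho_N}$ is a computable real uniformly in $N$. Because $X$ is compact, $\norm{\rho_N}=\max_{x\in X}\rho_N(x)$, so whenever $\norm{\rho_N}<2^{-k}$ we have $X\subseteq\bigcup_{n\le N}B(a_n;2^{-k})$. On the other hand the $\rho_N$ decrease pointwise to $0$ by density of $(a_n)_{n\in\N}$. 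Thus, given $k$, I would search for an $N$ and a rational approximation to $\norm{\rho_N}$ that together witness $\norm{\rho_N}<2^{-k}$, and then output $0,1,\ldots,N$.

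The main obstacle, and essentially the only step that is not routine bookkeeping, is making this last argument rigorous: one must check that the effective search for a good $N$ terminates, which is exactly where compactness enters, via Dini's theorem applied to the decreasing sequence $(\rho_N)_{N\in\N}$ of continuous functions on the compact space $X$ converging pointwise to the continuous function $0$, which forces $\norm{\rho_N}\to 0$. Everything else reduces to the computability facts already recorded above together with the (non-uniform) computability of the unit from Theorem~\ref{thm:comp.unit}.
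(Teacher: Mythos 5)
Your proposal is correct and follows essentially the same route as the paper: the same series decomposition of $D_{a_n}$ into post-compositions involving $\kappa_m - \kappa_m(a_n)\unit_A$, and the same compactness criterion identifying a $2^{-k}$-cover with $\norm{\min_n D_{a_n}} < 2^{-k}$. The only (harmless) variations are that you invoke Proposition \ref{prop:post.comp} with $g(z)=|z|/(1+|z|)$ where the paper computes the inverse of $\unit_A + |\kappa_m - \kappa_m(a_n)\unit_A|$ by an explicit search, and that you justify termination of the cover search via Dini's theorem where the paper cites the existence of a finite subcover directly.
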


\begin{proof}
We first note that 
\begin{equation}
D_{a_n}  =  \sum_{t \in \N} 2^{-t} \frac{|\kappa_t - \kappa_t(a_n) \cdot \unit_A|}{\unit_A + |\kappa_t - \kappa_t(a_n) \cdot \unit_A|}.\label{eqn:gn.vec}
\end{equation}
It is easy to verify that the series in Equation (\ref{eqn:gn.vec}) (viewed as a sequence of partial sums)
 has a computable modulus of 
convergence.  
Thus, we only need to demonstrate that each term of this series is computable
uniformly in $n,t$.  To facilitate this, for all $n,t \in \N$, set:
\begin{eqnarray*}
u_{n,t} & = & |\kappa_t - \kappa_t(a_n) \cdot \unit_A| \\
v_{n,t} & = & \unit_A + u_{n,t}.\\
\end{eqnarray*}
We note that since $v_{n,t}(p) \geq 1$ for all $p \in X$, $v_{n,t}$ is invertible and 
$\norm{v_{n,t}^{-1}} \leq 1$.
By Theorem \ref{thm:comp.unit} and Corollary \ref{cor:inv.abs.mx.mn}, 
$u_{n,t}$ and $v_{n,t}$ are computable vectors uniformly in $n,t$.  It only remains to 
show that $v_{n,t}^{-1}$ is computable uniformly in $n,t$.  This can be accomplished 
by a simple search procedure as follows.  Given $k \in \N$, search for a rational vector $u$
so that $\norm{u v_{n,t} - \unit_A} < 2^{-k}$.  Since $\norm{v_{n,t}^{-1}} \leq 1$, it
follows that $\norm{u - v_{n,t}^{-1}} < 2^{-k}$.

Since $\widehat{a_n}$ is computable uniformly in $n$, it now follows that 
$(d(a_m,a_n))_{m,n \in \N}$ is a computable array of real numbers.  
whence it follows that $X^\# = (X,d, (a_n)_{n \in \N})$ is a computable presentation of $X$.
All that remains is to demonstrate the computable compactness of $X^\#$.  
We accomplish this as follows.  
For every $k \in \N$ and finite nonempty $F \subseteq \N$, 
$X = \bigcup_{n \in F} B(a_n; 2^{-k})$ if and only if 
$\norm{\min_{n \in F} D_{a_n}} < 2^{-k}$.  Since $(a_n)_{n \in \N}$ is dense in $X$, 
and since $X$ is compact, for each $k \in \N$, there \emph{is} a finite $F \subseteq \N$
so that $X = \bigcup_{n \in F} B(a_n; 2^{-k})$.  By Corollary \ref{cor:inv.abs.mx.mn}, 
from $F$ it is possible to compute $\norm{\min_{n \in F} D_{a_n}}$.  
It follows that $X^\#$ is a computably compact presentation.
\end{proof}

Thus, we now seek to produce a dense sequence $(a_n)_{n \in \N}$ of points of $X$ 
so that $\widehat{a_n}$ is computable uniformly in $n$. To this end, for each $n \in \N$, set $u_n = \min\{|\kappa_n|, \unit_A\}$.  
It follows from Corollary \ref{cor:inv.abs.mx.mn} that $(u_n)_{n \in \N}$ is computable.
Furthermore, when $f \in C(X; [0,1])$, $\norm{u_n - f} \leq \norm{|\kappa_n| - f} \leq \norm{\kappa_n - f}$.  
Thus, $(u_n)_{n \in \N}$ is dense in $C(X; [0,1])$.  It then follows that 
$(u_n)_{n \in \N}$ generates a dense subalgebra of $C^*(X)$.

Ensuring the computability of $\widehat{a_n}$ is simplified somewhat by the following 
principle which is perhaps of independent interest.

\begin{proposition}\label{prop:comp.eval}
Suppose $p \in X$ has a computable well structured name.  
Then, $\widehat{p}$ is computable.  Furthermore, an index of $\widehat{p}$ can 
be computed from an index of a well structured name of $p$.
\end{proposition}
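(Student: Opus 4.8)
The plan is to show that for a point $p$ with a computable well structured name $(f_s)_{s \in \N}$, we can compute $\widehat{p}(g)$ for any computable $g \in A$. Since $\widehat{p}$ is a bounded linear functional and $(u_n)_{n \in \N}$ (defined just above, from $\kappa_n$ via $\min\{|\kappa_n|, \unit_A\}$) generates a dense subalgebra, by the principle recalled in Section \ref{sec:back} it suffices to compute $(\widehat{p}(u_n))_{n \in \N} = (u_n(p))_{n \in \N}$ uniformly in $n$, since each $u_n$ is a computable vector uniformly in $n$ and $u_n \in C(X;[0,1])$. Actually, to be safe about uniformity and about the multiplicative structure, I would instead argue directly that $\widehat{p}$ is a computable functional: given an $A^\#$-name of an arbitrary $f \in A$, first compute $M$ with $\norm{f} < M$, and it then suffices to evaluate $\widehat{p}$ on vectors of the form $g \in C(X;[0,1])$, since $f$ decomposes via its real and imaginary parts (accessible through the involution) and rescaling into a fixed-coefficient combination of such functions, with the post-composition operators effecting this decomposition being computable by Proposition \ref{prop:post.comp} and Corollary \ref{cor:inv.abs.mx.mn}.

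The heart of the matter is therefore: given a computable $g \in C(X;[0,1])$ (say via a rational-vector name) and the computable well structured name of $p$, compute $g(p)$ to within $2^{-k}$. Here is where Lemma \ref{lm:eval} does the work. For a dyadic rational $r \in (0,1)$, consider the function $g_r = \min\{\max\{n_r \cdot (g - r\unit_A), 0\}, \unit_A\}$ for a suitable scaling $n_r$, or more simply a computable $h$ with $h = 1$ on $g^{-1}[r+\eta, 1]$ and $h = 0$ on $g^{-1}[0, r-\eta]$ built by post-composing $g$ with a computable piecewise-linear function of one real variable; then $h(p) > \frac12$ iff roughly $g(p) > r$, and Lemma \ref{lm:eval} tells us that $h(p) > \frac12$ is equivalent to $\norm{f_s(\unit_A - h)} < \frac13$ holding for some $s$ — and also to it holding for all large $s$. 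Both $\norm{f_s(\unit_A - h)}$ (a computable real, uniformly in $s$, by computability of the name, of multiplication, of the unit via Theorem \ref{thm:comp.unit}, and of the norm) and the threshold $\frac13$ are such that we can semidecide ``$< \frac13$'' and semidecide ``$> \frac13 - \delta$''; running these searches over $s$ lets us decide, for each dyadic $r$ in a grid, whether $g(p)$ is (approximately) above or below $r$, and a binary-search / bracketing over such $r$ pins down $g(p)$ to any desired precision. I would organize this as: to get $g(p)$ within $2^{-k}$, test $r = j \cdot 2^{-(k+2)}$ for $j = 1, \ldots, 2^{k+2}-1$, using the band width $\eta \sim 2^{-(k+3)}$ so the ``ambiguous'' region near each $r$ is thin, and extract a dyadic approximation from the transition.

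The main obstacle is the usual semidecidability gap: the condition in Lemma \ref{lm:eval}(\ref{lm:eval::2}) is only $\Sigma^0_1$-type ($\exists s$) and its negation is not obviously so, so a naive search could hang exactly when $g(p) = r$. The fix is the standard one for computable analysis — never ask a sharp question. Because we build the cutoff function $h$ with a genuine band of width $\eta$ around $r$, the two cases we must distinguish are ``$g(p) > r + \eta$'' and ``$g(p) < r - \eta$'', and Lemma \ref{lm:eval} applied to $h$ (for the first) and to a complementary cutoff $h'$ (for the second, using $1 - g$ in place of $g$, or a cutoff that is $1$ below $r-\eta$) makes each of these a genuine $\exists s$ statement; at least one of the two searches halts, and running them in parallel over a grid of $r$'s with overlapping bands guarantees we locate $g(p)$ up to the grid resolution. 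All the remaining pieces — uniform computability of $f_s$, of products, of $\unit_A$, of the norm, and of the one-variable cutoff post-compositions — are already supplied by Theorems \ref{thm:comp.unit}, Proposition \ref{prop:post.comp}, and Corollary \ref{cor:inv.abs.mx.mn}, so the uniformity claim (an index of $\widehat{p}$ computed from an index of the well structured name) follows by inspection of the construction.
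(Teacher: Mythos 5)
Your proposal is correct and follows essentially the same route as the paper's proof: reduce to computing $\widehat{p}(u_m)$ for the dense generators $u_m \in C(X;[0,1])$, post-compose $u_m$ (and $1-u_m$) with computable one-variable threshold functions so that Lemma \ref{lm:eval} turns each comparison ``$u_m(p) > r$'' into a $\Sigma^0_1$ search, and bracket the value between two nearby rational thresholds for which both searches succeed. The paper implements the thresholds with polynomials $q_r$ satisfying $q_r(x) > \tfrac12$ iff $x > r$ and searches directly for a bracketing pair $(r_0,r_1)$ with $r_1 - r_0 < 2^{-k}$, rather than your piecewise-linear banded cutoffs dovetailed over a grid, but this is a cosmetic difference.
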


\begin{proof}
Fix a well structured name $(f_s)_{s \in \N}$ of $p$.  
Since $\norm{\widehat{p}} \leq 1$, and since $(u_m)_{m \in \N}$ generates a dense subalgebra 
of $A$, it suffices to show $(\widehat{p}(u_m))_{m \in \N}$ is computable.
  
Let $(q_r)_{r \in \Q\cap(0,1)}$ be a computable family of polynomials with the following properties:
\begin{itemize}
	\item $q_r$ maps $[0,1]$ into $[0,1]$.
	
	\item $q_r(x) > \frac{1}{2}$ if and only if $x > r$.
\end{itemize}

Let $k,m \in \N$.  It is required to compute a rational number $q$ so that 
$|\widehat{p}(u_m) - q| < 2^{-k}$.   We search for $r_0, r_1 \in \Q \cap [0,1]$ and $s \in \N$ so that the following hold.
\begin{enumerate}
	\item $0 < r_1 - r_0 < 2^{-k}$.
	
	\item $r_0 = 0$ or 
$\norm{f_s(1 - q_{r_0} \circ u_m)} < \frac{1}{3}$.

	\item $r_1 = 1$ or $\norm{f_s(1 - q_{1 - r_1} \circ (1 - u_m))} < \frac{1}{3}$.
\end{enumerate}
Set $q = r_0$.  

We first show that this search terminates.  To this end, we first consider the case $u_m(p) = 0$.  
If we take $r_0 = 0$ and $r_1 = 2^{-(k+1)}$, then $q_{1 - r_1}(1 - u_m(p)) > \frac{1}{2}$ and so 
by Lemma \ref{lm:eval} there exists $s$ so that $\norm{f_s(1 - q_{1 - r_1} \circ (1 - u_m))} < \frac{1}{3}$.
Now, suppose $u_m(p) = 1$.   In this case, we take $r_1 = 1$ and $r_0 = 1 - 2^{-(k+1)}$.  
We then have $q_{r_0}(u_m(p)) > 1/2$ and so there exists $s$ so that 
$\norm{f_s(1 - q_{r_0} \circ (u_m))} < \frac{1}{3}$.  Finally, suppose $0 < u_m(p) < 1$.  
Choose rational numbers $r_0, r_1 \in (0,1)$ so that $0 < r_1 - r_0 < 2^{-k}$ and so that 
$r_0 < u_m(p) < r_1$.  Thus, 
$q_{r_0}(u_m(p)) > \frac{1}{2}$ and $q_{1 - r_1}(1-u_m(p)) > \frac{1}{2}$.  
It then follows from Lemma \ref{lm:eval} that there exists $s$ so that 
$\norm{f_s(1 - q_{r_0} \circ u_m)} < \frac{1}{3}$ and so that 
$\norm{f_s(1 - q_{1 - r_1} \circ (1 - u_m))} < \frac{1}{3}$.  Hence, in all cases, the above search
terminates.

It only remains to show $|q - u_m(p)| < 2^{-k}$.  It follows from Lemma \ref{lm:eval} that 
$r_0  \leq u_m(p) \leq r_1$.  Since $r_1 - r_0 < 2^{-k}$, we have $|q - u_m(p)| < 2^{-k}$.
\end{proof}

In light of Lemma \ref{lm:dense} and Proposition \ref{prop:comp.eval}, much of the 
task at hand now reduces to the following.  

\begin{lemma}\label{lm:adm.gn}
If $f \in C(X; [0,1])$ is a computable vector, and if $\norm{f} > \frac{2}{3}$, then there is a computable adequately structured name that begins with $f$.   Furthermore, an index of such a name can be computed from 
an index of $f$.
\end{lemma}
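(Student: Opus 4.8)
The plan is to construct the adequately structured name recursively, starting with $f_0 = f$ and building $f_{n+1}$ from $f_n$ so that $f_{n+1}^{-1}(\tfrac12,\infty) \subseteq f_n^{-1}(\tfrac23,\infty)$, while keeping everything computable uniformly. The key observation driving the construction is that we have a great deal of freedom: the only requirements on the name $(f_n)_{n\in\N}$ are the nesting condition between consecutive terms and the fact that $\bigcap_n f_n^{-1}(\tfrac12,\infty)$ is a single point $a$. Since the name is allowed to identify \emph{any} point where $f$ exceeds $\tfrac23$ (and $\norm{f} > \tfrac23$ guarantees such a point exists), we do not need to decide in advance which point $a$ is; we let the construction converge to one.

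First I would set up the recursion. Suppose $f_n \in C(X;[0,1])$ is a computable vector with $\norm{f_n} > \tfrac23$ (maintained as an invariant, with $f_0 = f$). I want to produce a computable $f_{n+1} \in C(X;[0,1])$ with $\norm{f_{n+1}} > \tfrac23$ and $f_{n+1}^{-1}(\tfrac12,\infty) \subseteq f_n^{-1}(\tfrac23,\infty)$. The idea is to take $f_{n+1} = \varphi \circ f_n$ for a suitable computable $\varphi : [0,1] \to [0,1]$ that is $1$ near $1$ and $0$ on $[0, \tfrac23]$ — for instance $\varphi(t) = 0$ for $t \le \tfrac23$, $\varphi(t) = \tfrac{t - 2/3}{1/3}$ for $\tfrac23 \le t \le 1$ (clamped). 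Then $f_{n+1}^{-1}(\tfrac12,\infty) = f_n^{-1}(c,\infty)$ for some $c$ with $\tfrac23 < c < 1$, so the nesting $f_{n+1}^{-1}(\tfrac12,\infty) \subseteq f_n^{-1}(\tfrac23,\infty)$ holds automatically. However, this naive choice does not by itself guarantee $\norm{f_{n+1}} > \tfrac23$ — we only know $\norm{f_n} > \tfrac23$, which could mean $\norm{f_n}$ is just barely above $\tfrac23$, and then $\varphi \circ f_n$ might have sup norm near $0$. So $\varphi$ cannot be fixed in advance; it must be rescaled based on computable information about $\norm{f_n}$.

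The fix: using that the norm is intrinsically computable (so $\norm{f_n}$ is a computable real) together with $\norm{f_n} > \tfrac23$, we can compute a rational $\alpha$ with $\tfrac23 < \alpha < \norm{f_n}$, and then choose $\varphi = \varphi_\alpha$ to be piecewise linear, equal to $0$ on $[0,\tfrac23]$, rising to $1$ on $[\tfrac23, \alpha']$ for some rational $\alpha'$ strictly between $\tfrac23$ and $\alpha$, and equal to $1$ on $[\alpha',1]$. Since there is a point $t_0$ with $f_n(t_0) > \alpha > \alpha'$, we get $f_{n+1}(t_0) = 1$, so $\norm{f_{n+1}} = 1 > \tfrac23$, restoring the invariant. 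Each $\varphi_\alpha$ is computable uniformly from $\alpha$, and $\alpha$ is computable uniformly from an index of $f_n$; by Proposition \ref{prop:post.comp} (applied with $g = \varphi_\alpha$, noting $\varphi_\alpha(0) = 0$) the post-composition $f_{n+1} = \varphi_\alpha \circ f_n$ is computable uniformly from an index of $f_n$ and an index of $A^\#$. Iterating, $(f_n)_{n\in\N}$ is a computable sequence, and an index for it is computed from an index of $f$.

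It remains to check that $(f_n)_{n\in\N}$ actually names a point, i.e. that $\bigcap_n f_n^{-1}(\tfrac12,\infty)$ is a singleton. Set $K_n = \overline{f_n^{-1}(\tfrac12,\infty)}$; the nesting gives $K_{n+1} \subseteq f_n^{-1}(\tfrac23,\infty) \subseteq f_n^{-1}(\tfrac12,\infty)$, and each $K_n$ is nonempty (it contains the point where $f_n = 1$), so by compactness $\bigcap_n K_n \ne \emptyset$, and hence $\bigcap_n f_n^{-1}(\tfrac12,\infty)$ contains a point. The main obstacle is that \emph{a priori} this intersection need not be a single point — the construction as described could converge to a whole closed set. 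To force it down to a singleton I would build in a secondary shrinking: interleave the "threshold sharpening" step above with steps that additionally multiply by (or take the min with) a Urysohn-type computable function that isolates a smaller and smaller neighborhood of some chosen point of $K_n$. Concretely, having a computable dense sequence available and knowing $K_n$ is nonempty, one can at stage $n$ computably locate a point $p_n$ and a computable $h_n \in C(X;[0,1])$ that is $1$ on a ball of radius $\to 0$ around (an approximation to) $p_n$ and supported in a slightly larger ball, and incorporate $h_n$ into $f_{n+1}$ so that $\diam K_{n+1} \to 0$; completeness of $(X,d)$ then forces $\bigcap_n K_n$ to be a single point. Verifying that this can be done computably and compatibly with the threshold condition — in particular that $\norm{f_{n+1}} > \tfrac23$ survives — is the delicate part, and is where I expect the bulk of the work to lie.
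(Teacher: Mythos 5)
Your overall architecture matches the paper's: build the name in stages starting from $f_0 = f$, use post-composition to enforce the nesting $f_{s+1}^{-1}(\tfrac12,\infty) \subseteq f_s^{-1}(\tfrac23,\infty)$ while keeping the sup norm bounded away from $\tfrac12$, and then worry separately about whether the intersection is a singleton. You correctly identify the singleton issue as the crux. But your proposed mechanism for resolving it is circular. You write that ``having a computable dense sequence available and knowing $K_n$ is nonempty, one can at stage $n$ computably locate a point $p_n$'' and then build a Urysohn function supported on small balls around $p_n$. At this point in the argument you have \emph{no} computable access to points of $X$, no computable dense sequence of points, and no computable metric on $X$ --- producing exactly those objects is the purpose of this lemma within the proof of Theorem \ref{thm:main}. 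The only computable data available is the presentation of $C^*(X)$, i.e.\ the dense sequence of \emph{vectors} $(u_n)_{n\in\N}$ in $C(X;[0,1])$ and the ability to approximate norms. So the step ``locate $p_n$ and take $h_n$ supported in a small ball around it'' cannot be carried out, and without it your construction may well converge to a closed set of positive diameter.

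The paper's proof supplies precisely the missing idea, working entirely at the level of vectors. At stage $s+1$ it checks whether some not-yet-used $u_j$ with $j \le s$ satisfies $\norm{f_s + u_j} > \tfrac53$ (a norm condition, hence checkable); if so it ``incorporates'' $u_j$ by passing to $\tfrac{3}{10}(f_s + u_{j})$, and otherwise rescales $f_s$; it then iterates a fixed polynomial $p$ with attracting fixed points at $0$ and $1$ and repelling fixed point at $\tfrac12$ to push the norm back above $\tfrac34$. The arithmetic of the constants gives the nesting, and the incorporation mechanism forces the singleton: if two points $a \ne b$ both survived, a $u_r$ separating them (large at $a$, $<\tfrac23$ at $b$) would eventually satisfy the incorporation test because $f_s$ is provably large near $a$ infinitely often, and once incorporated it drives $f_{s+1}(b)$ below $\tfrac12$. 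This replaces your metric shrinking argument by a purely functional-analytic one and is the essential content of the lemma; your proposal as written does not reach it.
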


\begin{proof}
Fix a polynomial $p$ over $\Q$, e.g., $p(x) = 16 x^5  - 40 x^4 + 32 x^3 - 8x^2 + x$, with the following properties.
\begin{enumerate}
	\item $p$ maps $[0,1]$ onto $[0,1]$.
	
	\item For all $x \in \R$, $p(x) = x$ if and only if $x \in \{0,\frac{1}{2}, 1\}$.
	
	\item $p(x) > x$ when $x \in (-\infty,0) \cup (\frac{1}{2},1)$.
	
	\item $p(x) < x$ when $x \in (0, \frac{1}{2}) \cup (1, \infty)$.  
\end{enumerate}
Let $p^k$ denote the $k$-th iterate of $p$.   
It follows that for each $x \in [0,1]$,
\[
\lim_k p^k(x) = \left\{
\begin{array}{cc}
1 & \frac{1}{2} < x \leq 1\\
\frac{1}{2} & x = \frac{1}{2}\\
0 & 0 \leq x < \frac{1}{2}.\\
\end{array}
\right.
\]
We construct $(f_s)_{s \in \N}$ by stages.  At stage $s$, we define $f_s$, and we may declare a $j \in \N$
to be \emph{incorporated}.\\

\noindent\bf Stage $0$:\rm\ Set $f_0 = f$.  No $j$ is incorporated at stage $0$.\\

\noindent\bf Stage $s+1$:\rm\ We first define a function $h_s$ by cases as follows.  
If there is no $j \leq s$ so that $\norm{f_s + u_j} > \frac{5}{3}$ and so that 
$j$ has not been incorporated by the end of stage $s$, then 
set $h_s = \frac{3}{4}f_s$.  Otherwise, let $j_s$ be the least such $j$, and set 
$h_s = \frac{3}{10}(f_s + u_{j_s})$; we also incorporate $j_s$ at stage $s+1$.  

By way of induction, $h_s \in C(X; [0,1])$ and $2/3 < \norm{f_s} \leq 1$.  Thus, $1 > \norm{h_s} > \frac{1}{2}$.  
By the properties of $p$, there is a natural number $k$ so that $\norm{p^k \circ h_s} > \frac{3}{4}$; 
let $k_s$ be the least such number.  
Set $f_{s+1} = p^{k_s} \circ h_s$.

We now show $f_{s+1}^{-1}(1/2,\infty) \subseteq f_s^{-1}(2/3, \infty)$.  Suppose $f_{s+1}(t) > \frac{1}{2}$.  
By the properties of $p$, $h_s(t) > \frac{1}{2}$.
If no $j$ is incorporated at $s+1$, then $h_s = \frac{3}{4} f_s$ and so $f_s(t) > \frac{2}{3}$.
Suppose $j_s$ is incorporated at $s+1$.  Then, $f_s(t) + u_{j_s}(t) > \frac{5}{3}$, and so 
$f_s(t) > \frac{2}{3}$.  

We now demonstrate that $(f_s)_{s \in \N}$ names a point.  By construction, for each $s \in \N$, there exists $x_s \in X$ so that $f_{s+1}(x_s) > \frac{3}{4}$.   
Since $X$ is compact and metrizable, there is an $a \in X$ and an increasing sequence $(s_j)_{j \in \N}$ so that
$\lim_j x_{s_j} = a$.  
We show that $(f_s)_{s\in \N}$ names $a$.  
To this end, let $C = \bigcap_{s \in \N} f_s^{-1}(1/2, \infty)$.   We note that $C = \bigcap_{s \in \N} \overline{f_s^{-1}(1/2,\infty)}$.
By a standard argument, $a \in C$.  By way of contradiction suppose $b \in C - \{a\}$.   
By Urysohn's Lemma, there is a continuous $\lambda : X \rightarrow [0,1]$ so that 
$\lambda(a) = 1$ and $\lambda (b) = 0$.  Hence, there exists $r$ so that 
$u_r(a) > \frac{11}{12}$ and so that $u_r(b) < \frac{2}{3}$.   Let $t_0$ be the least stage so that 
every $r' < r$ that is incorporated has been incorporated by the end of stage $t_0$.  
Let $m$ be the least integer so that $s_{m} + 1> t_0$.  Then, 
$u_r(a) + f_{s_{m} + 1}(a) > \frac{3}{4} + \frac{11}{12} = \frac{5}{3}$.  Thus, 
if $r$ has not been incorporated by stage $s_{m} + 1$, it is at stage $s_{m} + 2$.   
So, let $s+1$ be the stage at which $r$ is incorporated.  Then, 
$h_s(b) <  \frac{1}{2}$, and so $f_{s+1}(b) < \frac{1}{2}$- a contradiction since $b \in C$.
\end{proof}

There is a computable $e : \N \rightarrow \N$ so that $\ran(e) = \{n \in \N\ :\ \norm{u_n} > \frac{2}{3}\}$.
It follows from Lemma \ref{lm:adm.gn} that there is a uniformly computable sequence 
$(\Lambda_n')_{n \in \N}$ of adequately structured names so that $\Lambda_n'$ 
originates with $u_{e(n)}$.
Let $a_n \in X$ be the point named by $\Lambda_n'$.
By Lemma \ref{lm:dense}, $(a_n)_{n \in \N}$ is dense in $X$ (take $r = \frac{2}{3})$.
Now, set 
$\Lambda_n = \min\{\psi \circ \Lambda_n', 2^{-n}\}$ where $\psi$ is the function defined in 
Section \ref{sec:prelim.cl}.  Thus, by Lemma \ref{lm:adq.well}, $\Lambda_n$ also names 
$a_n$.  In addition, by Proposition \ref{prop:post.comp}, 
$(\Lambda_n)_{n \in \N}$ is computable.  
Hence, by Proposition \ref{prop:comp.eval}, $(\widehat{a_n})_{n \in \N}$ is computable.
Therefore, by Theorem \ref{thm:comp.fnctl}, $X$ has a computably compact presentation.

\section{Applications and extensions}\label{sec:appl.ext}

Since isometric isomorphisms map computable presentations onto computable presentations,
 Theorem \ref{thm:main} easily implies the following.

\begin{corollary}\label{cor:spec}
Suppose $A$ is a unital commutative $C^*$ algebra.  If $A$ is computably presentable, 
then the spectrum of $A$ has a computably compact presentation.
\end{corollary}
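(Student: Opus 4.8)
The plan is to reduce the statement directly to Theorem \ref{thm:main} via the classical Gelfand duality together with the observation that the computability of a presentation is an isometric-isomorphism invariant. First I would recall the classical Gelfand–Naimark theorem: if $A$ is a unital commutative $C^*$ algebra, then the Gelfand transform $\Gamma \colon A \to C^*(\Delta(A))$, $\Gamma(a) = \widehat{a}$ where $\widehat{a}(\varphi) = \varphi(a)$, is an isometric $*$-isomorphism of $A$ onto $C^*(\Delta(A))$, and $\Delta(A)$ equipped with the Gelfand topology is a compact Hausdorff space. Since $A$ is separable (it has a countable dense subset coming from any presentation), $\Delta(A)$ is in fact metrizable, so it is a compact metrizable space in the sense of Theorem \ref{thm:main}.

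The key step is then: a $C^*$ algebra $B$ is computably presentable if and only if every $C^*$ algebra isometrically $*$-isomorphic to $B$ is computably presentable. This is because an isometric $*$-isomorphism $\Phi \colon B \to B'$ carries a presentation $(B, (v_n)_{n \in \N})$ of $B$ to the presentation $(B', (\Phi(v_n))_{n \in \N})$ of $B'$, and since $\Phi$ preserves the algebraic operations and the norm, the induced bijection on rational vectors (given by the same $*$-polynomial indexing, by the ``effective'' enumeration $(\ratpoly_j)_{j \in \N}$ of $\mathcal U$) preserves norms exactly; hence the norm-computation algorithm for the first presentation transfers verbatim to the second. So from a computable presentation of $A$ we obtain, via $\Gamma$, a computable presentation of $C^*(\Delta(A))$. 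Applying Theorem \ref{thm:main} with $X = \Delta(A)$ yields that $\Delta(A)$ has a computably compact presentation, which is exactly the assertion that the spectrum of $A$ has a computably compact presentation.

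There is essentially no hard part here; the statement is a formal consequence of Theorem \ref{thm:main} plus standard facts. The only point that merits a sentence of care is verifying separability (hence metrizability) of $\Delta(A)$: a presentation of $A$ supplies a countable dense subset, so $A$ is a separable $C^*$ algebra, and the spectrum of a separable unital commutative $C^*$ algebra is metrizable because its weak-$*$ topology on the unit ball of $A^*$ is metrizable. After that, one simply invokes Theorem \ref{thm:main}. (We note that, as remarked in the introduction, the proof of Theorem \ref{thm:main} is moreover nearly uniform, so the passage here is uniform as well, modulo the computability of the unit.)
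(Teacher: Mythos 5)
Your proposal is correct and follows exactly the paper's argument: the paper's proof is the one-line observation that isometric isomorphisms carry computable presentations to computable presentations, so the Gelfand transform transfers the computable presentation of $A$ to $C^*(\Delta(A))$ and Theorem \ref{thm:main} applies. Your additional check that $\Delta(A)$ is metrizable (via separability of $A$) is a reasonable piece of due diligence that the paper leaves implicit.
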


In addition, we can now give nontrivial examples of operator algebras that do not have 
computable presentations.

\begin{corollary}\label{cor:appl} \mbox{ }
\begin{enumerate}
	\item[(C1)] There exists a compact space $X$ so that $X$ is computably presentable but $C^*(X)$ has no computable presentation.

\item[(C2)] There exists a commutative unital $C^*$-algebra that has a $Y$-computable presentation if and only if $Y$ is not low (i.e., $Y' > \emptyset'$).
\end{enumerate}
\end{corollary}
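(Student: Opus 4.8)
The plan is to derive both statements by feeding known results into Theorem~\ref{thm:main} together with its converse, due to Fox~\cite{Fox.2022+} --- that is, into the equivalence ``$C^*(X)$ is computably presentable if and only if $X$ has a computably compact presentation'' and its relativizations. For (C1) the additional ingredient comes from computable topology; for (C2), from classical computable structure theory and the computable Stone duality.

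For (C1), I would invoke the known fact that \emph{computable compactness} is strictly stronger than being a computable compact Polish space: there is a compact metrizable space $X$ that admits a computable presentation but admits no computably compact presentation at all (see, e.g., \cite{EffedSurvey, IlKi}). For this $X$, computable presentability is given; and were $C^*(X)$ computably presentable, Theorem~\ref{thm:main} would produce a computably compact presentation of $X$, a contradiction. Hence $C^*(X)$ has no computable presentation. The one point needing care is that the cited space really is not \emph{homeomorphic} to any computably compact space (rather than merely carrying a single bad presentation); the standard examples have this stronger property.

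For (C2), I would first note that the proof of Theorem~\ref{thm:main} above, and Fox's proof of its converse, relativize uniformly in the oracle: for every compact metrizable $X$ and every $Y$, the algebra $C^*(X)$ has a $Y$-computable presentation if and only if $X$ has a $Y$-computably compact presentation. Next, the computable Stone duality (D4) also relativizes: for a countable Boolean algebra $B$ and every $Y$, $B$ has a $Y$-computable presentation if and only if the Stone space $\widehat{B}$ has a $Y$-computably compact presentation. It therefore suffices to exhibit a Boolean algebra $B$ whose degree spectrum is exactly the set of non-low degrees, i.e.\ with ``$B$ has a $Y$-computable presentation'' equivalent to ``$Y' >_T \emptyset'$.'' Granting such a $B$, set $A = C^*(\widehat{B})$; this is a commutative unital $C^*$-algebra, since $\widehat{B}$ is compact and (being the Stone space of a countable Boolean algebra) metrizable. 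Chaining the three equivalences yields: $A$ has a $Y$-computable presentation iff $\widehat{B}$ has a $Y$-computably compact presentation iff $B$ has a $Y$-computable presentation iff $Y$ is not low. In particular $A$ has no computable presentation, since $\emptyset$ is low.

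The main obstacle is the Boolean algebra required for (C2). One half of its spectrum identification comes for free: by the Downey--Jockusch theorem every low Boolean algebra has a computable presentation, so a Boolean algebra that is not computably presentable cannot have a low degree in its (upward closed) degree spectrum --- otherwise a copy computed by a low oracle would be a low copy, Downey--Jockusch would yield a computable copy, and the spectrum would be all degrees. The real content is the other half: arranging that \emph{every} non-low degree computes a copy of $B$. This calls for a genuine construction of a structure with prescribed degree spectrum --- in the spirit of the degree-spectrum constructions described in \cite{Montalban.2021}, and of the Stone-space constructions of \cite{bastone} --- rather than a formal consequence of the dualities, and it is the one essentially new input. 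The remaining items are routine: that a ``$Y$-computably compact presentation'' means what one expects ($Y$-computable distance array, together with the ability to compute from $k$ a finite list of basic $2^{-k}$-balls covering $X$), and that Theorem~\ref{thm:main}, Fox's converse, and (D4) all relativize uniformly.
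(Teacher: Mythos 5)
Your argument for (C1) is correct and is exactly the paper's: take a computably presentable compact metrizable space not homeomorphic to any space with a computably compact presentation, and apply Theorem~\ref{thm:main}; the paper merely adds that such a space can be manufactured from (D2) together with a further duality from \cite{newpolish} and Mal'cev's characterization of the computably presentable subgroups of $(\Q,+)$ \cite{Mal}, whereas you cite it directly from \cite{EffedSurvey,IlKi}. Your cautionary remark about ``not homeomorphic to any computably compact space'' versus ``one bad presentation'' is the right one, and under the paper's definition (a presentation of a Polish space may use any compatible complete metric) the two formulations coincide.

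For (C2), however, your route has a fatal flaw: the Boolean algebra you need does not exist. You correctly observe that the Downey--Jockusch theorem forces the spectrum of a non-computably-presentable Boolean algebra to avoid all low degrees, but the same phenomenon persists higher up: by Thurber's theorem every low$_2$ Boolean algebra has a computable copy (and Knight--Stob pushed this to low$_4$). So if a countable Boolean algebra $B$ had degree spectrum exactly the non-low degrees, then picking any degree $Y$ that is low$_2$ but not low (such degrees exist, even among the c.e.\ degrees) would give a low$_2$ copy of $B$, hence a computable copy, hence $\emptyset$ in the spectrum --- contradicting that $\emptyset$ is low. Thus the ``one essentially new input'' you defer to a degree-spectrum construction is provably impossible in the class of Boolean algebras, and the Stone-duality chain (D4) cannot produce (C2). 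The paper avoids this by working with torsion-free abelian groups instead: by \cite{mel} there is a torsion-free abelian group $G$ whose presentations are exactly the non-low ones, and the relativized Pontryagin duality (D2) together with the relativized Theorem~\ref{thm:main} and Fox's converse yields that $C^*(\widehat{G})$ has a $Y$-computable presentation if and only if $Y$ is not low (see \cite{newpolish} for the details of this reduction). Your surrounding logic --- that all three equivalences relativize uniformly and can be chained --- is sound; only the source of the pathological structure must be changed from a Boolean algebra to a torsion-free abelian group.
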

	
(C1) follows from the existence of a compact computably presentable
metric space $X$ that is not homeomorphic to any metric space with a computably compact 
presentation \cite{lupini, EffedSurvey}.
	Notably, perhaps the most elegant way to produce such a space uses (D2) stated in 
 Section \ref{sec:back}, and one 
	more effective duality established  in \cite{newpolish} which we will not state here.
	Using this sequence of effective dualities, the existence of such a space $X$ can be reduced to the 
	old result of Mal'cev~\cite{Mal} characterising  computable subgroups of $(\mathbb{Q}, +)$.
	Similarly, in (C2) the existence of such a pathological space can be reduced to the existence of a torsion-free abelian group that has exactly the non-low presentations established in \cite{mel}; see also \cite{newpolish}
	for a detailed explanation.

By inspection, the steps of the proof of Theorem \ref{thm:main} are uniform.  That is, from an index of a presentation $C^*(X)^\#$ and a $C^*(X)^\#$-index of the unit, we may compute 
an index of a computably compact presentation of $X$.  
However, uniformity holds in a more general sense which we describe now.

To begin, we set forth a method of naming presentations of $C^*$ algebras.
Let $A$ be a $C^*$ algebra, and fix a presentation $A^\# = (A, (u_n)_{n \in \N})$.
The set $D(A^\#) = \{(r, j, r')\ :\ r,r' \in \Q \cap (0, \infty)\ \wedge\ r < \norm{\ratpoly_j[A^\#]} < r'\}$ is the \emph{diagram} of 
$A^\#$.  We say that $f \in \N^\N$ is a \emph{name} of $A^\#$ if 
$D(A^\#) = \{(r,j,r')\ :\ \exists k \in \N f(k) = \langle r,j,r' \rangle\}$.  
It follows that $A^\#$ is computable if and only if it has a computable name.  

Suppose $\mathcal{M} = (X,d)$ is a metric space and $\mathcal{M}^\# = (\mathcal{M}, (s_j)_{j \in \N})$
is a presentation of $\mathcal{M}$. 
The \emph{diagram} of $\mathcal{M}^\#$ is $D(\mathcal{M}^\#) = \{(r,j,k,r')\ :\ r < d(s_j,s_k) < r'\}$.
 A \emph{name} of $\mathcal{M}^\#$ is an $f \in \N^\N$ so that 
$D(\mathcal{M}^\#) = \{(r,j,k,r')\ :\ \exists t \in \N f(t) = \langle r,j,k,r' \rangle\}$.

A \emph{total boundedness function} of $\mathcal{M}^\#$ is an $f \in \N^\N$ so that for each $j \in \N$, $f(j)$ is a code of a 
finite $F \subseteq \N$ so that $X = \bigcup_{k \in F} B(s_k; 2^{-j})$.  
A computably presented metric space is compact if and only if it has a total boundedness function.

Now, we note that the steps in the proof of Theorem \ref{thm:main} are fully uniform in that 
they require only the name of a presentation $C^*(X)^\#$ and an $C^*(X)^\#$-name of the unit.
  We are thus led to the following.

\begin{theorem}\label{thm:main.uniform}
There is an oracle Turing machine that, given a name of a presentation $A^\#$ of a commutative unital 
$C^*$ algebra $A$ and an $A^\#$-name of $\unit_A$, produces a 
name of a presentation of $\Delta(A)^\#$ and a total boundedness function of 
$\Delta(A)^\#$.
\end{theorem}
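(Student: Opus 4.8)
The plan is to re-examine the entire chain of constructions from Section~\ref{sec:proof.main} and verify that each step that previously used ``computable'' as an input hypothesis in fact only used \emph{an index (or name) of} the relevant data, so that everything relativizes cleanly to an oracle that enumerates the diagram $D(A^\#)$ together with an $A^\#$-name of $\unit_A$. Concretely, I would first observe that from a name of $A^\#$ one can, uniformly, enumerate the graph of the (multi-valued) norm approximation on rational vectors: given $j$ and $k$, search the oracle for a triple $(r,j,r')$ with $r'-r<2^{-k}$ and output a rational within $2^{-k}$ of $\norm{\ratpoly_j[A^\#]}$. This is exactly the oracle analogue of ``$A^\#$ is computable,'' so every result in Sections~\ref{sec:prelim.comp} and~\ref{sec:proof.main} that was stated ``uniformly in an index of $A^\#$'' now reads ``uniformly in a name of $A^\#$.'' The one piece of data that Theorem~\ref{thm:comp.unit} supplied non-uniformly --- an $A^\#$-index of $\unit_A$ --- is now handed to us directly as an $A^\#$-name of $\unit_A$, which is all that Proposition~\ref{prop:post.poly}(3), Proposition~\ref{prop:post.comp}(3), and Corollary~\ref{cor:inv.abs.mx.mn} ever needed.

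Next I would walk through the construction in order, recording the oracle dependence at each stage. From a name of $A^\#$ we get the injective enumeration $(\kappa_n)_n$ of rational vectors (a purely syntactic reindexing of the $\ratpoly_j$), hence the metric $d$ and the formula~\eqref{eqn:gn.vec}; the vectors $u_{n,t}, v_{n,t}$ are obtained from $\unit_A$ and the $\kappa_t$ via $|\cdot|$ and addition, all uniformly computable relative to our oracle by Corollary~\ref{cor:inv.abs.mx.mn}, and the inverses $v_{n,t}^{-1}$ by the search argument in the proof of Theorem~\ref{thm:comp.fnctl} (the bound $\norm{v_{n,t}^{-1}}\le 1$ is classical and needs no oracle). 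Then $(u_n)_n = (\min\{|\kappa_n|,\unit_A\})_n$ is oracle-computable, the computable $e$ with $\ran(e)=\{n:\norm{u_n}>2/3\}$ is obtained by searching the (oracle-enumerable) strict norm lower bounds, Lemma~\ref{lm:adm.gn} produces the uniformly oracle-computable sequence $(\Lambda_n')_n$ of adequately structured names (the polynomial $p$ and the stage construction are entirely explicit; the only norm queries are of the form $\norm{f_s+u_j}>5/3$ and $\norm{p^k\circ h_s}>3/4$, both decidable relative to our enumeration oracle by searching for witnessing intervals), and Lemma~\ref{lm:adq.well} together with Proposition~\ref{prop:post.comp} gives the well structured names $\Lambda_n$. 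Finally Proposition~\ref{prop:comp.eval} yields indices for $\widehat{a_n}$ uniformly, and Theorem~\ref{thm:comp.fnctl} assembles a presentation of $(X,d)$ together with a procedure that, given $k$, searches the compact space for a finite $F$ with $\norm{\min_{n\in F}D_{a_n}}<2^{-k}$ and outputs it --- this last output is precisely a total boundedness function.

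It then remains only to reconcile the target: Theorem~\ref{thm:comp.fnctl} builds a computably compact presentation of $(X,d)$ relative to our oracle, i.e.\ (relativized to the oracle) a computable array $(d(a_m,a_n))_{m,n}$ and a computable total-boundedness procedure; I would package these as, respectively, a \emph{name} of the presentation $\Delta(A)^\# = (X, d, (a_n)_n)$ (enumerate the tuples $(r,j,k,r')$ with $r<d(a_j,a_k)<r'$, reading off approximations to $d(a_j,a_k)$ from the oracle-computable array) and a total boundedness function (the function $j\mapsto F_j$ just described). The identification of $X$ with $\Delta(A)$ is supplied by classical Gelfand duality, which is not affected by the oracle. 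The main obstacle --- and the reason the theorem is not an immediate corollary --- is the bookkeeping: one must check that \emph{every} appeal to computability of $A^\#$ in Sections~\ref{sec:prelim.comp}--\ref{sec:proof.main} was in fact an appeal to an index/name of $A^\#$ (it was, since all the norm queries are of the decidable form ``is there a rational interval of width $<2^{-k}$ around $\norm{\ratpoly_j[A^\#]}$ contained in a prescribed open set''), and that the sole non-uniformity, the unit, has been externally provided. Since the paper has already flagged ``our only obstacle to a fully uniform proof of Theorem~\ref{thm:main} is the uniform computability of the unit,'' supplying an $A^\#$-name of $\unit_A$ in the hypothesis removes exactly that obstacle, and the proof is complete by inspection of the now fully relativized construction.
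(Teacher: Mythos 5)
Your proposal is correct and follows essentially the same route as the paper, whose entire ``proof'' of this theorem consists of the single observation preceding the statement that the construction of Section~\ref{sec:proof.main} is fully uniform in a name of the presentation and an $A^\#$-name of the unit; you have simply carried out that inspection explicitly and in the right order. One minor imprecision (inherited from the paper's own Lemma~\ref{lm:adm.gn} and harmless for the uniformity claim): the norm queries such as $\norm{f_s+u_j}>\frac{5}{3}$ are only semi-decidable relative to the enumeration oracle, not decidable, so the stage construction should be read as a search/priority arrangement rather than a case split on a decidable predicate.
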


We note that A. Fox \cite{Fox.2022+} gives a computable operator which, given the name of a presentation of a compact $X$ and a total boundedness function, produces a name of a presentation of $C(X)$ and a name of the unit. Thus a total boundedness function and a unit are, in some sense, the right data to make the Gelfand duality effective.

In addition, by Theorem \ref{thm:comp.unit}, we obtain the following.

\begin{corollary}\label{cor:conn.unif}
There is a Turing machine that, given the name of a presentation $A^\#$ of a commutative unital 
$C^*$ algebra $A$ whose spectrum has finitely many connected components, and the number of connected 
components of $\Delta(A)$, produces a 
name of a presentation of $\Delta(A)^\#$ and a total boundedness function of 
$\Delta(A)^\#$.
\end{corollary}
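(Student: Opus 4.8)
The plan is to obtain Corollary \ref{cor:conn.unif} by composing the uniform Gelfand machine of Theorem \ref{thm:main.uniform} with a name-level version of the construction underlying Proposition \ref{prop:fin.conn}. Write $A = C^*(\Delta(A))$ via the classical Gelfand isomorphism, so that the hypothesis ``$\Delta(A)$ has finitely many connected components'' is precisely the hypothesis of Proposition \ref{prop:fin.conn}, and the extra input — the number $n_0$ of connected components of $\Delta(A)$ — is exactly the parameter that proof uses. Thus the task reduces to: from a name of $A^\#$ (as an oracle) and the number $n_0$, compute an $A^\#$-name of $\unit_A$; then hand this, together with the name of $A^\#$, to the machine of Theorem \ref{thm:main.uniform}.

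For the first step I would inspect the proof of Proposition \ref{prop:fin.conn}. For each precision $k$ it searches for rational vectors $v_1,\dots,v_{n_0}$ of $A^\#$ satisfying conditions (1)--(3) there, and then outputs $v = \sum_{n=1}^{n_0} v_n$, which is guaranteed to satisfy $\norm{v - \unit_A} < 2^{-k}$. Every condition in that search is a finite Boolean combination of strict inequalities $\norm{w} < q$ or $\norm{w} > q$ with $q \in \Q$ and $w$ a rational vector of $A^\#$ whose rational $*$-polynomial index is computed from the indices of the $v_i$ using the effective algebra operations on $\mathcal{U}$ fixed in Section \ref{sec:back}. Given a name of $A^\#$ — an enumeration of the diagram $D(A^\#)$ — each such inequality is semidecidable: to confirm $\norm{\ratpoly_j[A^\#]} < q$ one waits for a triple $(r,j,r')$ to appear in the enumeration with $r' \le q$, and symmetrically for $\norm{\ratpoly_j[A^\#]} > q$. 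Hence the predicate ``$(v_1,\dots,v_{n_0})$ satisfies (1)--(3)'' is $\Sigma^0_1$ relative to the name, uniformly in $k$ and $n_0$. Moreover the termination argument of Proposition \ref{prop:fin.conn} — letting each $v_n$ approach the indicator function of the $n$-th component — uses only density of the rational vectors and so survives the passage from an index to a name. Therefore there is an oracle Turing machine that, given a name of $A^\#$ and $n_0$, outputs for every $k$ a rational vector $v$ with $\norm{v - \unit_A} < 2^{-k}$, i.e. an $A^\#$-name of $\unit_A$.

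It then remains only to feed this $A^\#$-name of $\unit_A$, together with the given name of $A^\#$, into the oracle Turing machine of Theorem \ref{thm:main.uniform}; its output is a name of a presentation $\Delta(A)^\#$ together with a total boundedness function of $\Delta(A)^\#$, as required, and the whole procedure is uniform. The only point needing care is the verification that the norm inequalities occurring in the search of Proposition \ref{prop:fin.conn} are semidecidable relative to the diagram, together with the routine bookkeeping of computing rational $*$-polynomial indices for products, differences, and sums of the candidate vectors; both are immediate from the effective enumeration $(\ratpoly_j)_{j \in \N}$ of $\mathcal{U}$, so I do not expect any genuine obstacle.
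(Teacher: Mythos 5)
Your proposal is correct and is essentially the paper's own (implicit) argument: the paper derives this corollary by composing Theorem \ref{thm:main.uniform} with the uniform unit computation of Proposition \ref{prop:fin.conn}, relativized to a name of $A^\#$, exactly as you describe. Your observation that the search conditions in Proposition \ref{prop:fin.conn} are open norm inequalities and hence semidecidable from an enumeration of the diagram is precisely the point that makes the relativization go through.
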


\section{Conclusion}\label{sec:concl}

We have shown that the Gelfand duality for $C^*$ algebras holds effectively, and that this 
effective duality is highly uniform provided enough information about the unit is given.
In doing so, we have contributed to the program of studying the effective content of 
classical mathematics by providing another effective duality principle and by advancing the 
computable theory of operator algebras.  The essential content of our main result is that 
from a (presentation of) a $C^*$ algebra $C^*(X)$, one can compute a complete description of the 
points of $X$.  Our theorem connects the computability theory of $C^*$ algebras with the 
well-developed computability theory of Polish spaces \cite{EffedSurvey}.  
Along the way, we have produced a number of less important though useful 
results on the computability of the unit.

Just as the dualities of classical mathematics build bridges between seemingly unrelated
disciplines, effective dualities transfer methods and theorems between disparate 
branches of computable mathematics.  One would then expect Theorem \ref{thm:main}
to lead to new discoveries in the computability theory of operator algebras such as 
Corollary \ref{cor:appl}.

At this point, we are naturally led to the following question which was raised by 
T. McNicholl a few years ago (see also \cite{bastone}).

\begin{question}[McNicholl]
Is Banach-Stone Duality effective?  That is, if the Banach space $C(X)$ has a computable 
presentation, does it follow that $X$ has a 
computably compact presentation?
\end{question}
	
	The effective duality (D1) stated above gives a positive answer in the case when the domain is totally disconnected. It has recently been announced by Melnikov and Ng that
	indeed Banach-Stone Duality \emph{fails} computably, in the sense that there is a compact metrizable
	space $X$ so that the Banach space $C(X)$ has a computable presentation but $X$ does not 
	have a computably compact presentation.	

\bibliographystyle{amsplain}
\bibliography{ourbib}

\end{document}